\documentclass{amsart}

\usepackage[british,UKenglish]{babel}
\usepackage{graphicx}         
\usepackage{fancyhdr}
\usepackage{rotating}
\usepackage{amsmath}
\usepackage{amssymb}
\usepackage{amsthm}
\usepackage{tikz}
\usepackage{url}
\usepackage{enumerate}
\usepackage{mathtools}
\usepackage{setspace}
\usepackage{color}
\usepackage[normalem]{ulem}
\usetikzlibrary{positioning}

\newtheorem*{notation*}{Notation}

\newtheorem{innercustomthm}{Theorem}
\newenvironment{customthm}[1]
  {\renewcommand\theinnercustomthm{#1}\innercustomthm}
  {\endinnercustomthm}
  
\usepackage{environ}
\newcounter{quote}

\NewEnviron{myquotenumber}{\vspace{3ex}\par
\refstepcounter{quote}
\hfill\parbox{\dimexpr \textwidth--2cm}
{\centering\small\textit{\BODY}}
\hfill\llap{(\thequote)}\vspace{2ex}\par}

\newtheorem{theorem}{Theorem}[section]
\newtheorem{lemma}[theorem]{Lemma}
\newtheorem{fact}[theorem]{Fact}
\newtheorem*{theorem*}{Theorem}
\newtheorem*{maintheorem*}{Main Theorem}
\newtheorem*{lemma*}{Lemma}

\newtheorem{propo}[theorem]{Proposition}
\theoremstyle{definition}

\newtheorem{question}[theorem]{Question}
\newtheorem{definition}[theorem]{Definition}
\newtheorem{remark}[theorem]{Remark}
\newtheorem*{remark*}{Remark}

\newtheorem{claim}{Claim}
\newtheorem*{claim*}{Claim}

\newcommand{\N}{\mathbb{N}}
\newcommand{\M}{\mathfrak{M}}

\newcommand{\wt}{\widetilde}
\newcommand{\Mgp}{$\wt{\M}_c$--group}
\newcommand{\Mcc}{$\wt{\M}_c$--condition}
\newcommand{\ls}{\lesssim}

\newcommand{\qtext}[1]{\quad \text{#1}\quad}

\title[Solvability of the radical in pseudo--finite $\wt{\M}_c$--groups]{Solvability of the radical in pseudo--finite groups with the DCC on centralizers up to finite index}

\author[N Hempel]{Nadja Hempel}
\address{Heinrich Heine University D\"{u}sseldorf}
\email{nadja.valentin@hhu.de}

\author[U. Karhum\"{a}ki]{Ulla Karhum\"{a}ki}
\address{Universit\'{e} Claude Bernard Lyon 1;  Institut Camille Jordan}
\email{karhumaki@math.univ-lyon1.fr}

\thanks{The second author is supported by ANR project MAS (ANR-25-CE40-5294).}

\begin{document}

\begin{abstract} The subgroup generated by all solvable normal subgroups in a pseudo--finite group with the descending chain condition on centralizers up to finite index is solvable. 
Additionally, there is no finitely generated pseudo--finite group whose definable sections satisfy such a chain condition on centralizers.  \end{abstract}

\maketitle

\section{Introduction}

This paper is dedicated to the study of the \emph{radical} of pseudo--finite groups satisfying the descending chain condition on centralizers up to finite index.  By the  \emph{radical} of a group, we mean the subgroup generated by all  normal solvable subgroups. As products of normal solvable subgroups are solvable, the radical of a finite group remains solvable, but for infinite groups this is no longer true: There might be normal solvable subgroups of larger and larger solvability class, and thus the radical becomes a non--solvable subgroup. Thus, it is natural to ask whether there are any conditions on the group for which the radical remains solvable. As solvability of the radical implies its definability, model theoretic tools can be used to study it. Indeed, it is a classical question in model theory whether the radical of a  group satisfying a given first--order tameness condition is solvable. Early results of Belegradek \cite{Belegradek87} show that in a \emph{group of finite Morley rank} the radical is solvable. This result is generalized by Milliet to all groups with a \emph{supersimple} theory \cite{Milliet2013}. Recently, the second author and Wagner showed that the radical of  a \emph{finite--dimensional group with fine and additive dimension} is solvable \cite{Karhumaki-Wagner}. This is an important class of groups in model theory that has recently been defined \cite{Wagner2020} in order to obtain a common framework for several tame groups which admit a well--behaved notion of dimension, such as groups of finite Morley rank and \emph{$o$--minimal} groups. An easy but important observation in \cite{Karhumaki-Wagner} is that in finite--dimensional groups with fine and additive dimension, the centralizers satisfy the descending chain condition (DCC) up to finite index. That is, in such a group any descending chain of centralizers, so that each member is of infinite index in its predecessor, becomes stationary after finitely many steps.

At the center of model theoretic tameness conditions are the so called \emph{stable}--theories (e.g. complex field). They were introduced by Shelah and they always carry an abstract notion of independence generalizing for example linear independence in vector spaces and algebraic independence in fields. In the late 90's, the tools of stability theory were extended to the wider class of \emph{simple} theories. 

Groups with a model theoretically tame theory often satisfy Noetherian properties. For example, in groups of finite Morley rank and in $o$--minimal groups there are no infinite descending chains of definable subgroups. Stable groups satisfy the DCC on uniformly definable subgroups. In particular, they are $\M_c$--groups, i.e.  the DCC holds for centralizers of single elements. In a group with a simple theory any chain of intersections of uniformly definable subgroups, each having infinite index in its predecessor, has finite length. Again, restricting the latter condition to centralizers of elements, we obtain that groups whose theory is simple satisfy the already mentioned DCC on centralizers up to finite index (in fact this remains true in groups definable in the larger class of \emph{rosy} theories \cite{Onshuus2002}). We call such groups, \Mgp s. Motivated by this, the first author studied in detail \emph{definably hereditarily \Mgp}, e.\ g.  groups in which definable sections are \Mgp s \cite{Hempel2020}. In the introduction of \cite{Hempel2020} she explains in more detail the relationships between groups definable in (super)stable/simple/rosy structures, with proper references.  

A group it called \emph{pseudo--finite} if it is infinite and satisfies all first-order sentences of the common theory of finite groups,  or, equivalently, if it is elementarily equivalent to a non--principal ultraproduct of finite groups. We study pseudo--finite \Mgp s. Our work is motivated by questions on pseudo--finite groups whose theory is simple but ultimately the class of groups we study in this article, namely \Mgp, are defined using purely group theoretic notions.

While the solvability of the radical in an arbitrary stable group is an open problem, in \cite{Tent-Macpherson2007}, Macpherson and Tent proved that in a pseudo--finite stable group the radical is solvable. Soon after, Khukhro \cite{Khukhro2009a} generalized their result to pseudo--finite $\M_c$--groups. In \cite[Question 4.10]{Macpherson2018} it is asked whether the radical of a pseudo--finite group whose theory is simple is solvable. We answer this question positively as we obtain that, not only the radical of a pseudo--finite group with a simple theory, but of all pseudo--finite \Mgp s is solvable. In the proof, we use central arguments taken from \cite{Khukhro2009a}, ideas from \cite{Tent-Macpherson2007}  and techniques of almost group theory developed in \cite{Hempel2020}  to bound uniformly the solvability class of the radicals of finite groups whose ultraproduct is an \Mgp. This combined with an  important result by Wilson (Theorem~\ref{th:wilson--radical}), stating that in the class of finite groups the radical is uniformly definable,  allow us to prove our solvability result:

\begin{customthm}{3.3}The radical of any pseudo--finite \Mgp\ is solvable.\end{customthm}

Yet again, another open problem, originating to a question by Sabbagh, asks whether a pseudo--finite finitely generated group exists. In \cite{Karhumaki2021} the second author uses Khkhuro's result in \cite{Khukhro2009a} to show that there is no finitely generated pseudo--finite $\M_c$--group. By applying Theorem~\ref{th:main} and using similar arguments, we show that there is no finitely generated semi--simple pseudo--finite \Mgp. Precisely, we obtain the following.

\begin{customthm}{3.7}Let $G$ be a pseudo--finite \Mgp. If $G/R(G)$ is also an \Mgp\ then $G$ cannot be finitely generated. In particular, there is no finitely generated pseudo--finite group which is either a semi--simple \Mgp \, or a definably hereditarily \Mgp . \end{customthm}

In Section~\ref{sec:backgroun} we fix our notation and give all necessary background results on pseudo--finite groups and on \Mgp s. Theorems~\ref{th:main} and~\ref{th:fingen} are proven in Section~\ref{sec:results}. Then, in Section~\ref{sec:auxiliary}, we prove some auxiliary results which we believe to be of independent interest.

\section{Prerequisites}\label{sec:backgroun}
\subsection{Tildification}Let $G$ be a group, and $H, K \leqslant G $ be subgroups. The group $H$ is said to be \emph{almost contained} in $K$, written $H \ls K$, if $H \cap K$ has finite index in $H$. The subgroups $H,K \leqslant G$ are \emph{commensurable} if both $H \ls K$ and $K \ls H$. This is denoted by $H \sim K$.

\begin{definition}
Let $G$ be a group, and $H, K \leqslant G $.  We define the following:
\begin{enumerate}[(1)]
  
    \item The subgroup $\wt{C}_K(H)=\{k\in K : H  \sim C_H(k) \}$ is the \emph{almost centralizer} of $H$ in $K$. 
    \item The \emph{almost centre} of $G$ is the subgroup $\wt{Z}(G)=\wt{C}_G(G)$.
\end{enumerate}
We say that \emph{commensurability is uniform} in $\wt{C}_K(H)$ if there is some $m\in \mathbb{N}$ so that if $H  \sim  C_H(k)$ then $|H : C_H(k)| < m$.  
\end{definition}

Note that $\wt C_K(H)$ is not necessarily definable.

The following important results are proven by Hempel.

\begin{fact} \label{fact_AlCen}Let $G$ be a group and $H, K \leqslant G $ be definable subgroups. 
\begin{enumerate}[(1)]
    \item \emph{({\cite[Theorem 2.17]{Hempel2020}}, Symmetry)} \label{item_Sym} Then 
    \[
    H \ls \wt C_G(K)\qtext{if and only if}K \ls \wt C_G(H).
    \]
    \item \emph{({\cite[Theorem 2.27]{Hempel2020}})}\label{item_GenVonNeumannThm} Suppose that 
\begin{itemize}
\item  $H$ normalizes $K$;
\item  $H \leq \wt C_G(K)$;
\item $K \leq \wt C_G(H)$, moreover commensurability is uniform in $\wt{C}_K(H)$.
\end{itemize}
Then the group $[K,H]$ is finite.
\end{enumerate}
\end{fact}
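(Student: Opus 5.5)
The two parts of Fact~\ref{fact_AlCen} are quoted from \cite{Hempel2020}. The plan below reconstructs them along the lines of the classical Neumann / BFC arguments. It works in a sufficiently saturated elementary extension, which is harmless because $H$ and $K$ are definable.

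\emph{Symmetry.} By the obvious symmetry of the statement it suffices to prove one direction. Assume $H \ls \wt C_G(K)$ but $K \not\ls \wt C_G(H)$, and put $H_0 = H\cap \wt C_G(K)$, a subgroup of finite index in $H$. Since $K\cap \wt C_G(H)$ has infinite index in $K$, we can pick $k_0,k_1,\dots \in K$ lying in pairwise distinct cosets of $K\cap\wt C_G(H)$. Each $C_H(k_ik_j^{-1})$ then has infinite index in $H$. We can also pick $h_0,h_1,\dots\in H_0$ such that each $h_j$ avoids the finitely many previously relevant centralizers $C_H(k_i k_{i'}^{-1})$; this is possible by Neumann's lemma, since a group is never a finite union of cosets of infinite-index subgroups. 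On the other side, every $h\in H_0$ has $|K:C_K(h)|<\infty$, so $h$ has only finitely many $K$-conjugates.

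Next apply Ramsey's theorem and compactness to the array $([h_j,k_i])_{i,j}$ to make it indiscernible. The finiteness of each $h_j^K$ forces $[h_j,k_i]=[h_j,k_{i'}]$ for many pairs $i\neq i'$, which by indiscernibility holds for all pairs. Hence $h_j\in C_H(k_ik_{i'}^{-1})$, contradicting the choice of the $h_j$. The main obstacle is that $\wt C$ is not definable and commensurability is not uniform. I would handle this by first showing, via saturation, that $H_0$ is covered by the definable sets $\{h: |K:C_K(h)|\le n\}$. A Baire/compactness argument then shows that one such set is generic, that is, finitely many translates of it cover $H$. This lets us replace "finite index" by "index $\le n$" for a fixed $n$, which is the form the indiscernibility argument needs.

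\emph{Finiteness of $[K,H]$.} This mimics Neumann's proof of the BFC theorem. Uniformity gives $|H:C_H(k)|<m$ for all $k\in K$, so every $k$ has fewer than $m$ conjugates under $H$. Choose $k$ with $|H:C_H(k)|$ maximal, and let $h_1,\dots,h_r$ be a transversal of $C_H(k)$ in $H$. The standard maximality argument then shows that every commutator $[k',h]$ with $k' \in K$ and $h \in H$ lies in a set determined by finitely many elements. Concretely, let $F$ be the finite set $\{k^{h_i}\}\cup\{k'^{h_i}\}$. Using $H\le\wt C_G(K)$, the intersection $D=\bigcap_i C_K(h_i)$ has finite index in $K$, and the set $\{[k',h]: k' \in K,\ h \in H\}$ is then finite with bounded size. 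Since $H$ normalizes $K$, the subgroup $[K,H]\le K$ is normal in $\langle K,H\rangle$ and is generated by finitely many commutators, each of which has boundedly many conjugates. Dietzmann's lemma, applied in the FC-group $[K,H]$ to this finite normal generating set of elements of finite order, then gives that $[K,H]$ is finite. Showing that the generating commutators have finite order is the delicate step. I expect to get it from the observation that $[k,h]^{t}$ ranges over a finite set, together with the relation $[k,h]^n\in[K,H]\cap\wt Z$, exactly as in Neumann's argument.
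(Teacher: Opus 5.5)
The paper gives no proof of this Fact; both parts are quoted from \cite{Hempel2020}. Your sketch therefore has to stand on its own, and it has two genuine problems.

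\textbf{Part (1): the saturation step.} Passing to a saturated elementary extension is not harmless. The hypothesis $H\ls\wt C_G(K)$ is not first-order and need not survive the passage. In fact (1) is false for non-saturated $G$. Let $G=\mathrm{FSym}(\N)\ltimes\mathbb{F}_2^{(\N)}$, with finitary permutations acting on the base by permuting coordinates. Take $H=G$, and let $K$ be the base group; it is $\emptyset$-definable as the set of elements commuting with all their conjugates. Each $g\in G$ centralizes a finite-index subgroup of $K$, so $G\ls\wt C_G(K)$. Yet every non-zero $v\in K$ has infinitely many conjugates, so $K\cap\wt C_G(G)=1$.

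So you must assume $G$ itself is $\omega$-saturated. This covers every use in the paper: ultraproducts, and $\wt{\M}_c$-groups, where commensurability is uniform and the hypotheses become first-order. Under that assumption your uniformization is correct, and the Ramsey/indiscernibility step can be dropped. Once $X_N=\{h\in H:|K:C_K(h)|\le N\}$ is generic in $H$, pick $k_0,\dots,k_N$ in distinct right cosets of $K\cap\wt C_G(H)$. By Neumann's lemma some $h\in X_N$ lies outside every $C_H(k_ik_j^{-1})$. Then $h^{k_0},\dots,h^{k_N}$ are pairwise distinct, contradicting $h\in X_N$.

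\textbf{Part (2): the finite-order step.} The step you call delicate is exactly the missing idea, and Neumann's route does not carry over. In Neumann's setting, torsion of $[x,y]$ comes from Schur's theorem applied to the finitely generated FC-group $\langle x,y\rangle$. Here $\langle k,h\rangle$ need not be FC, and the hint about $[k,h]^n\in[K,H]\cap\wt Z$ is not an argument. Your set $F$ is also garbled. To pass from $D$ to $K$, write $k'=dt$ with $t$ in a finite right transversal of $D$, and use $[dt,h]=[d,h]^t[t,h]$.

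Once $S=\{[k,h]:k\in K,\ h\in H\}$ is known to be finite, one can finish as follows.
\begin{enumerate}
\item Since $[k,h]^x=[kx,h][x,h]^{-1}$ for $x\in K$ and $[k,h]^y=[k^y,h^y]$ for $y\in H$, we get $S^{KH}\subseteq SS^{-1}$, which is finite.
\item Hence $N=[K,H]=\langle S\rangle$ is finitely generated with $|N:Z(N)|<\infty$, so $N'$ is finite by Schur.
\item In the abelian group $N/N'$, for each $h$ the map $n\mapsto[n,h]$ is a homomorphism with finite-index kernel. So $[N,H]$ lies in the finite torsion subgroup $T$ of $N/N'$.
\item Modulo $T$, the group $H$ centralizes $N$. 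So for each $k$ the map $h\mapsto[k,h]$ is a homomorphism on $H$ whose kernel contains $C_H(k)$, of index $<m$.
\item Its image is a finite subgroup of a torsion-free group, hence trivial.
\end{enumerate}
Therefore $N/N'=T$ and $[K,H]$ is finite, with no saturation needed for this part.
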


In \cite{Hempel2020}, the first author studies groups in which definable sections satisfy the DCC of centralizers each having infinite index in its predecessor, and calls such groups \Mgp s. While this is a perfectly natural set--up, one might also want to consider a more restrictive class of groups where the DCC property may fail in (definable) quotients. With this in mind, we use the following terminology, which does \emph{not} coincide with the terminology in \cite{Hempel2020} (but coincides with the terminology in other recent papers e.g. \cite{Wagner2020,Karhumaki-Wagner}).

\begin{definition} Let $G$ be a group. 
\begin{enumerate}[(1)]
\item We say that $G$ satisfies the \emph{\Mcc} if there is $m< \omega$ such that there are no $(g_i: i \leqslant m)$ in $G$ so that $|C_G(g_j: j < i): C_G(g_j: j \leqslant i)|\geqslant m$ for all $i \leqslant m$. 
\item If $G$ satisfies the \Mcc, we say that $G$ is an \emph{\Mgp}.
\item If any (definable) section of $G$ satisfies the \Mcc, we say that $G$ is a \emph{(definably) hereditarily \Mgp}. 
\end{enumerate}
\end{definition}

Thus we call groups which are said to be \Mgp\ in \cite{Hempel2020}, definably hereditarily \Mgp.

Note that if $G$ is an \Mgp\ then for any $X\subseteq G$ the commensurability is uniform in $\wt{C}_G(X)$ (if necessary, see \cite[Lemma 4.4]{Wagner2020}) and hence $\wt{C}_G(X)$ is a definable subgroup of $G$. Note also that trivially subgroups of an \Mgp\ are again \Mgp s and \emph{definable} subgroups of definably hereditarily \Mgp s are again definably hereditarily \Mgp s. Arbitrary subgroups of definably hereditarily \Mgp s are \Mgp s but not necessary (definably) hereditarily so.

\subsection{Pseudo--finite groups}\label{Sec:pf} We denote by $\mathcal{L}_{gp}$ the language of groups.

\begin{definition}A \emph{pseudo--finite} group is an infinite group which satisfies every first--order sentence of $\mathcal{L}_{gp}$ that is true of all finite groups. \end{definition}

Typical examples of pseudo--finite groups are torsion--free divisible abelian groups, infinite extraspecial $p$--groups of exponent $p >2$ and rank $n$ and (twisted) Chevalley groups over pseudo--finite fields.

\begin{remark}Some authors allow pseudo--finite groups to be finite. We require that a pseudo--finite group is infinite. Therefore, given a pseudo--finite group $G$, any definable subgroup of $G$ is either finite or pseudo--finite. Moreover, given a definable normal subgroup $N$ of $G$, each of the groups $N$ and $G/N$ is either finite or pseudo--finite (see \cite[Lemma 2.16]{Point-Houcine2013}).
\end{remark}

By \L o\'s' Theorem, an infinite group (resp.\ $\mathcal{L}$--structure) is pseudo--finite if and only if it is elementarily equivalent to a non--principal ultraproduct of finite groups ($\mathcal{L}$--structures). Consistently, given any property $P$ of groups, we call a group $G$ \emph{pseudo--$P$} if it is elementary equivalent to an ultraproduct $ \prod_{i\in I} G_i /\mathcal{U}$ where, for $\mathcal{U}$--many $i$, the group $G_i$ satisfy the property $P$.

\subsection{The subgroup $R(G)$}\label{sec:radical}For any group $G$ denote by $R(G)$ the subgroup generated by all solvable normal subgroups of $G$. As mentioned in the introduction, we call this subgroup the \emph{radical} of $G$. In literature terminology varies; some authors call the radical of a group `the solvable radical'. Trivially, for any finite group the radical is solvable but $R(G)$ might be non--solvable for an infinite group $G$. For finite groups, we moreover have the following important result:

\begin{theorem}[Wilson \cite{Wilson2009}]\label{th:wilson--radical}There exists a parameter--free first--order formula $\varphi_{R}(x)$ so that given any finite group $H$ we have $\varphi_{R}(H)=R(H)$. \end{theorem}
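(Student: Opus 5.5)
The statement is Wilson's theorem and it depends on the classification of finite simple groups. My plan is to reduce it to a uniform first-order characterization of solvability, and then to relativize that characterization to cosets of the radical. I have only checked the outline below, not the details.

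\textbf{Step 1: a solvability witness for finite simple groups.} I would first find a parameter-free formula $\sigma(g)$ of the following shape: $g$ is a product of a fixed number $k$ of commutators $[a_i,b_i]$, where each $a_i,b_i$ is a conjugate of $g$. I would then show two things about $\sigma$.
\begin{itemize}
\item Every finite nonabelian simple group $S$ contains some $g\neq 1$ with $\sigma(g)$. This is where the classification enters. I would use bounded commutator width (Ore-type results) together with the fact that a suitable conjugacy class $C$ satisfies $C\subseteq [C,C]$, checking Lie type, alternating and sporadic groups separately.
\item No finite solvable group contains such a $g\neq 1$. For this I would argue by induction along the derived series. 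If $g\neq 1$ satisfies $\sigma(g)$ in a solvable $H$, choose the last term $H^{(j)}$ of the derived series that contains $g$. The relation $\sigma(g)$ places $g$ in $[\langle g^H\rangle,\langle g^H\rangle]\le H^{(j+1)}$, which contradicts the choice of $j$.
\end{itemize}
Together these give: a finite group $H$ is solvable if and only if $H\models\neg\exists g\,(g\neq 1\wedge\sigma(g))$. The direction from this sentence to solvability uses the fact that a nonsolvable $H$ has a nonabelian simple section. One must then lift a witness from that section to an actual element of $H$. I would handle this by taking $g$ of minimal order in a suitable coset and using Schur--Zassenhaus or coprime-action arguments.

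\textbf{Step 2: a formula for the radical.} Set $\bar H=H/R(H)$. Every minimal normal subgroup of $\bar H$ is a product $T_1\times\dots\times T_r$ of nonabelian simple groups. I would show that $x\notin R(H)$ if and only if a bounded first-order configuration built from $x$ witnesses nonsolvability.
\begin{itemize}
\item Since $\bar x\neq 1$, the normal closure $\langle \bar x^{\bar H}\rangle$ contains such a minimal normal subgroup $M$.
\item Commutators $[x,y_1,\dots,y_\ell]$ of bounded length, with suitable $y_i$, project onto a nontrivial element of a single factor $T_1$.
\item By the uniform covering results for simple groups, bounded products of conjugates of such an element reach a $\sigma$-witness.
\end{itemize}
So I would take $\varphi_R(x)$ to say: for all $y_1,\dots,y_\ell$ and all bounded products $w$ of conjugates of $[x,y_1,\dots,y_\ell]^{\pm1}$, the element $w$ is not a nontrivial $\sigma$-witness. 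The exact form of "$w$ is not a $\sigma$-witness" has to be chosen carefully. It is what makes the converse hold: by the argument of Step 1, elements of $R(H)$ can never produce a $\sigma$-witness, since they generate a solvable normal subgroup.

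\textbf{Main obstacle.} I expect the hard step to be uniformity in Step 2. The number $r$ of simple factors and the relevant lengths are unbounded, so every quantifier bound must be independent of $H$. I would first try to pass from $x$ to a single simple factor with a fixed number of commutators, using that $C_{\bar H}(M)\cap M=1$. I would then apply Liebeck--Shalev-type bounds $C^{N}=T$ for nontrivial classes $C$ of finite simple groups $T$, with $N$ depending only on a lower bound for $|C|$. The real work is ensuring that the chosen element has a large conjugacy class in $T_1$, so that $N$ is absolute. I would attempt this by replacing $x$ by a suitable commutator that lies in a large class.
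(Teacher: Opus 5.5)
The paper gives no proof of this statement; it is quoted from \cite{Wilson2009}. So your outline has to stand on its own, and it has a genuine gap in Step 2. Your Step 1 is Wilson's earlier theorem: a finite group is soluble iff no nontrivial $g$ is a product of $56$ commutators of conjugates of $g$. You may simply cite it.

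The inclusion $R(H)\subseteq\varphi_R(H)$ is fine. Every $w$ you quantify over lies in $R(H)$, and your derived-series argument applies there because each $R(H)^{(j)}$ is normal in $H$. The gap is the converse. For $x\notin R(H)$ you must exhibit an element $w$ that is itself a bounded product of conjugates of some $[x,y_1,\dots,y_\ell]^{\pm1}$, with $w=\prod_i[w^{a_i},w^{b_i}]$ holding \emph{exactly in $H$}. Your argument only produces $w$ whose image in $\bar H=H/R(H)$ is a $\sigma$-witness. Lifting the equation gives $\prod_i[w^{a_i},w^{b_i}]=wr$ for some $r\in R(H)$, and nothing forces $r=1$. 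Your lifting device from Step 1 (a lift of minimal order, Schur--Zassenhaus, coprime action) replaces $w$ by some $wr'$ with $r'\in R(H)$ uncontrolled. Such an element is in general no longer a bounded product of conjugates of commutators of $x$, so it falls outside the set your formula ranges over. In Step 1 this did not matter, since the witness could be any element of $H$. Here the witness must be exact, must lie in a set defined from $x$, and the bounds must be uniform across the unboundedly many soluble layers of $R(H)$. Achieving this is exactly the new content of Wilson's theorem, and your outline gives no mechanism for it.

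Separately, your route to uniform bounds cannot work. Take $H=\mathrm{PSL}_n(q)$ and $x$ a transvection. Since $[t,y]=t^{-1}t^{y}$, every $[x,y_1,\dots,y_\ell]$ is a product of at most $2^\ell$ transvections. Its class $C$ therefore satisfies $\log|C|=O(2^\ell n\log q)$, while $\log|H|$ is of order $n^2\log q$. Because $C^N=H$ forces $N\geq\log|H|/\log|C|$, no absolute $N$ exists once $n$ is unbounded; the same happens for a $3$-cycle in $\mathrm{Alt}_n$. Commutators shrink supports rather than enlarge classes. What bounded products of conjugates can reach is a witness inside a bounded-rank subgroup containing such a small-support element, for instance an $\mathrm{SL}_3(q)$ or an $\mathrm{Alt}_5$. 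This part of the plan must be rebuilt around small-support elements and bounded-rank subgroups, and even then the lifting problem above remains.
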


Let $G\equiv \prod_{i\in I} G_i /\mathcal{U}$ be a pseudo--finite group and consider $\varphi_{R}(G)$. Clearly the pseudo--(finite and solvable) group $\varphi_{R}(G)$ is a definable normal subgroup of $G$ which contains all definable solvable normal subgroups of $G$. In general, the subgroup $\varphi_{R}(G)$ need not to be solvable as there may not be a common bound on the derived lengths of $\mathcal{U}$--many of the finite solvable groups $\varphi_{R}(G_i)$. Indeed, this can happen even in the case of groups whose first--order theory is rather tame, as demonstrated in \cite{Tent-Macpherson2012} where the authors give an example of a pseudo-finite group $G$ whose first--order theory is NIP and the subgroup $\varphi_{R}(G)$ is non--solvable. Note also that $\varphi_{R}(G)$ is trivial if and only if $G$ is \emph{semi--simple}, that is, has no non--trivial abelian normal subgroup (see \cite[Lemma 2.17]{Point-Houcine2013}). Since semi--simplicity is first--order expressible by the sentence $(\forall x\neq 1)(\exists y)([x, x^y]\neq 1)$, it is easy to see that $R(G) \leqslant \varphi_R(G)$ and that $R(G)$ is solvable if and only if $\varphi_R(G)$ is solvable, and when $R(G)$ is solvable we have $R(G) =\varphi_R(G)$. Therefore, to show that the radical of a pseudo--finite group is solvable, it is enough show that $\varphi_{R}(G)$ is solvable.

\section{Solvability of the radical}\label{sec:results}Let $G$ be a non--abelian group such that for any $g\in G\setminus Z(G)$ the centralizer $C_G(g)$ is solvable. We say that such a group $G$ is \emph{$c$--bounded} (short for centralizer--bounded) if there is $\ell\in \N$ so that for any $g\in G\setminus Z(G)$ the solvable centralizer $C_G(g)$ is of derived length at most $\ell$. In this case the integer $\ell$ is called the \emph{$c$--derived length} of $G$.

\subsection{Finite solvable $c$--bounded groups} 
In what follows, we extract some parts of the proof of \cite[Theorem 1]{Khukhro2009a}. Most of the arguments are taken one--to--one. However we do offer the full proof for completeness. As a result we show that the derived length of a finite solvable $c$--bounded group depends only on its $c$--derived length. This is then applied in the proof of Theorem~\ref{th:main}. Further, we hope that such a statement can be useful in the future e.g. in the studies of locally finite \Mgp s (cf. \cite{Borovik-Karhumaki2019}).

We now fix the following notation for any finite solvable group $H$:
\begin{itemize}
    \item $h(H)$ denotes the Fitting height of $H$. 
    \item $F_d(H)$ is the $d$--th member in the upper Fitting series of $H$.
\end{itemize}

\begin{theorem}[Thompson's Theorem, \cite{Thompson64}]\label{th:Thosmpson} Suppose that a finite solvable group $G$ has an automorphism $\alpha$ of prime order coprime to the order of the group. Then $h(G) \leq 5 h(C_G(\alpha))$.
\end{theorem}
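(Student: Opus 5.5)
Since this is Thompson's classical theorem, which the paper simply cites from \cite{Thompson64}, I would only reconstruct the standard route rather than give a new argument. The approach is induction on $|G|$, using \emph{towers} to encode Fitting height. First I would reduce to $\langle\alpha\rangle$-invariant data. Because $(|\alpha|,|G|)=1$, coprime action gives two things: $\alpha$ stabilises a Sylow system of $G$, and for every $\alpha$-invariant normal subgroup $N$ we have $C_{G/N}(\alpha)=C_G(\alpha)N/N$. Hence passing to $\alpha$-invariant sections never increases the Fitting height of the fixed points. This lets me replace $G$ by a suitable $\alpha$-invariant section in a minimal counterexample.

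Next I would translate Fitting height into towers. The standard reduction, going back to Hall--Higman, is that if $h(G)=h$ then there is a sequence of $\alpha$-invariant sections $P_1,\dots,P_h$ of $G$ with the following properties: each $P_i$ is a $p_i$-group with $p_i\neq p_{i+1}$; $P_i$ normalises $P_{i+1}$; and $P_i/C_{P_i}(P_{i+1})$ acts faithfully, with suitable irreducibility arranged by passing to further sections. Conversely, a tower of length $k$ inside $C_G(\alpha)$ forces $h(C_G(\alpha))\ge k$. So the theorem reduces to the following claim: from an $\alpha$-invariant tower of length $h$ in $G$ one can extract a tower of length at least $h/5$ consisting of $\alpha$-fixed sections.

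The core step, and the one I expect to be the main obstacle, is a fixed-point lemma of Hall--Higman type. Suppose $\alpha$ acts on a group $Q$ that acts faithfully and irreducibly on an elementary abelian $p$-group $V$, and $\alpha Q$ acts compatibly. I would show that $C_V(\alpha)\neq 1$, except in a controlled list of cases. These exceptions involve $p=2$, Fermat primes, or $|\alpha|$ dividing $p-1$, and come from the exceptional cases of Hall--Higman Theorem~B. I would first try the generic case: there the restriction of $V$ to $\langle\alpha\rangle\ltimes Q$ contains the regular representation of $\langle\alpha\rangle$, which forces nonzero fixed points.

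For the exceptions, I would show they cannot occur at too many consecutive levels of the tower. Discarding a bounded number of levels around each exceptional one, I would then lift fixed points level by level: $C_{P_i}(\alpha)$ acts nontrivially on $C_{P_{i+1}}(\alpha)$ on the surviving levels. Careful bookkeeping of how many consecutive levels must be thrown away is what yields the factor $5$; a weaker constant would already suffice for the applications in this paper. Assembling the surviving levels gives a tower of length at least $h/5$ inside $C_G(\alpha)$, whence $h(G)\le 5h(C_G(\alpha))$.
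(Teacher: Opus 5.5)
The paper gives no proof of this statement: it is imported from \cite{Thompson64} and used only as a black box in Proposition~\ref{pr:Khukhro}. Relying on the citation is therefore exactly what the paper does. Read as a proof, however, your outline has a genuine gap. The preliminary reductions are standard: coprime action, $C_{G/N}(\alpha)=C_G(\alpha)N/N$, and towers as witnesses for Fitting height. But the entire content of the theorem sits in the unproved claim that an $\alpha$-invariant tower of height $h$ yields a tower of height at least $h/5$ of $\alpha$-fixed sections. You ascribe the constant to unspecified ``bookkeeping''.

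That claim, like the statement as literally quoted, is false when $C_G(\alpha)=1\neq G$. For $G=C_7$ and $\alpha\colon x\mapsto x^2$ (of order $3$) one has $h(G)=1>0=5h(C_G(\alpha))$. One must assume $C_G(\alpha)\neq 1$; the fixed-point-free case is Thompson's nilpotency theorem. This is harmless in Proposition~\ref{pr:Khukhro}, where in that case $h(P)\leq 1\leq 5\ell$.

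The mechanism you do describe would not carry out the extraction.

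\emph{The fixed-point lemma is false as formulated.} Drop the hypothesis $[Q,\alpha]\neq 1$, and suppose $\alpha$ centralizes $Q$ with $V$ $Q$-irreducible. By Schur's lemma $\alpha$ acts on $V$ as an element of the finite field $\mathrm{End}_Q(V)$. It is therefore a nontrivial scalar with $C_V(\alpha)=0$ as soon as $|\alpha|$ divides $|\mathrm{End}_Q(V)|-1$. No special primes are involved, and such levels are a main source of loss, not a rare exception.

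\emph{The exceptions are misidentified.} With $[Q,\alpha]\neq 1$, the coprime exception requires $Q$ to be a $2$-group. The critical case is $Q$ extraspecial of order $2^{2m+1}$ with $|\alpha|=2^m+1$ a Fermat prime. An example is $\mathrm{SL}(2,3)\cong Q_8\rtimes C_3\leq\mathrm{SL}(2,p)$ acting on $\mathbb{F}_p^2$ with $p\geq 5$, so the module characteristic is odd. Hall--Higman's Theorem~B concerns elements whose order is a power of the characteristic, i.e.\ the non-coprime case.

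\emph{The lifting step is unsupported.} Even where $C_{P_{i+1}}(\alpha)\neq 1$, nothing you state gives that $C_{P_i}(\alpha)$ acts nontrivially on $C_{P_{i+1}}(\alpha)$, which is what a tower inside $C_G(\alpha)$ needs. Moreover, a level on which $\alpha$ is fixed-point-free contributes nothing. Its two neighbours may be groups for the same prime, so they cannot simply be joined. Bounding the number of levels lost in these situations is precisely Thompson's argument. Your outline asserts, without any reason, that at most four out of every five are lost.
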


\begin{propo}\label{pr:Khukhro}Let $H$ be a finite solvable $c$--bounded group of $c$--derived length $\ell$. Then the derived length of $H$ is at most $6\ell+3$. \end{propo}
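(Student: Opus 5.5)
The plan is to bound the derived length of $H$ in two pieces: a bottom piece controlled directly by one centralizer, and a top piece whose Fitting height is controlled by Thompson's Theorem (Theorem~\ref{th:Thosmpson}). Throughout I use that for a finite solvable group $X$ the Fitting height is at most the derived length, so $h(C_H(g))\leq \ell$ for every $g\in H\setminus Z(H)$.

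\emph{Step 1: the bottom.} Let $F=F(H)$. If $Z(F)\not\leq Z(H)$, pick $z\in Z(F)\setminus Z(H)$. Then $F\leq C_H(z)$, so $F$ has derived length at most $\ell$. If $Z(F)\leq Z(H)$, then $C_H(F)\leq F$ gives $C_H(F)=Z(F)\leq Z(H)$. In this case I would instead pick a noncentral element $x$ of some Sylow factor $P$ of $F$ lying in $Z_2(P)$. Its centralizer contains all other Sylow factors of $F$ and a subgroup of index at most $|P:C_P(x)|$ in $P$. From this I would try to get a bound of the form $\ell+1$ or $\ell+2$ for the derived length of the relevant part of $F$, for instance via $[P,x]\leq Z(P)$. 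The target is that a bounded normal subgroup $N$ near the bottom has derived length at most $\ell+c$ with $c\leq 3$.

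\emph{Step 2: the top, via Thompson.} Since $H$ is non-abelian, choose $a\in H\setminus Z(H)$ of prime order $q$. Replacing $a$ by a suitable power of a noncentral element, I may arrange that $a$ acts nontrivially on a suitable Fitting factor. Let $Q$ be a $q'$-section on which $\langle a\rangle$ acts coprimely, for example $O_{q'}$ of the relevant layer of the upper Fitting series, or a Hall $q'$-subgroup normalised by $a$. Then $C_Q(a)$ is a section of $C_H(a)$, so Theorem~\ref{th:Thosmpson} gives
\[
h(Q)\leq 5h(C_Q(a))\leq 5\ell .
\]
The $q$-part contributes at most a bounded number of further Fitting layers, since a Sylow $q$-subgroup containing $a$ can be compared with $C_H(a)$ after passing to the centre of that Sylow. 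Combining these would give $h(H)\leq 5\ell+O(1)$.

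\emph{Step 3: assembling, and the main obstacle.} Fitting height alone does not bound derived length, because the Fitting factors are nilpotent of unbounded class. The key step is therefore to use centralizers again to show that each upper Fitting layer above $F$ is abelian modulo a bounded correction. Equivalently, I would show that $H/N$ acts faithfully enough that its derived length is bounded by its Fitting height plus a constant. This should give derived length of $H/N$ at most $5\ell+1$, and with Step 1 the total $(5\ell+1)+(\ell+2)=6\ell+3$.

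I expect this Step~3 (together with the case $Z(F)\leq Z(H)$ in Step~1) to be the main obstacle. The approach I would try first is Khukhro's device. Take $g$ central in a suitable nilpotent section but noncentral in $H$. Then $C_H(g)$ contains a whole layer, so that layer sits inside a group of derived length at most $\ell$, and the counting of layers then yields the linear bound $6\ell+3$.
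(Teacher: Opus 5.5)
There is a genuine gap at each stage of your outline: the steps left open are exactly the ones that carry the proof.

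In Step~1, the case $Z(F)\leq Z(H)$ is never closed, and the index estimate $|P:C_P(x)|$ cannot control derived length. The missing observation, which the paper uses, concerns any non-abelian nilpotent subgroup $N\leq H$ and any $g\in Z_2(N)\setminus Z(N)$. For such $g$ one has $[N,g,N]=[g,N,N]=1$, so the three subgroup lemma gives $N'\leq C_H(g)$. Since $g$ is noncentral in $H$, this yields $d(N)\leq \ell+1$. It holds for every nilpotent subgroup, so you need neither a case split on $Z(F(H))$ nor a separate ``bottom piece''.

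In Step~2, an arbitrary noncentral $a$ of prime order $q$ and an arbitrary $q'$-section say nothing about $h(H)$; for instance, $O_{q'}$ of one Fitting factor is nilpotent. Your claim that the $q$-part adds only boundedly many layers is exactly what has to be proved. A prime can recur in many Fitting layers, so you need a bound on the $q$-length, and Hall--Higman supplies one once Sylow subgroups have derived length at most $\ell+1$.

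The paper applies Thompson's theorem to a Hall $p'$-subgroup $P$ of $F_{h-1}(H)$, where $p$ divides $|H/F_{h-1}(H)|$. The Frattini argument gives a noncentral $p$-element $k\in N_H(P)\setminus F_{h-1}(H)$. Then either $P\leq C_H(k)$, or a power of $k$ induces an automorphism of order $p$ on $P$ and $h(P)\leq 5\ell$. To get back from $h(P)$ to $h(H)$ you must also use the $p$-length, for example $h(X)\leq (l_p(X)+1)h(P)+l_p(X)$ with $X=F_{h-1}(H)$. The paper's shortcut $h(H)\leq h(P)+2$ relies on $F_{h-1}(H)=O_p(F_{h-1}(H))P$. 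That fails, for instance, for $H=S_4\wr C_3$ and $p=3$, where $h(H)=4$ while $h(P)=1$.

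In Step~3 you rightly note that Fitting height alone does not bound derived length, but you give no argument. The split $(5\ell+1)+(\ell+2)$ is read off from the target rather than derived. The routine fact you are missing is that each upper Fitting factor $F_{i+1}(H)/F_i(H)$ is nilpotent, and its Sylow subgroups are sections of Sylow subgroups of $H$. So each factor has derived length at most $\ell+1$, and $d(H)\leq (\ell+1)h(H)$. Combined with the above, this gives a bound depending only on $\ell$, which is all that Theorem~\ref{th:main} uses, but the bound is not linear.

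The paper reaches $6\ell+3$ by asserting that $d(H)$ is at most $h(H)$ plus the maximal $p$-length. As literally stated this is false: a $p$-group has $h=1$ and $p$-length $1$ but arbitrary derived length. So the additive assembly you were hoping for is not established there either, and your sketch offers no independent route to it.
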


\begin{proof}Note first that if $N$ is a nilpotent subgroup of $H$ then the derived length of $N$ is at most $\ell +1$: We may assume that $N$ is not abelian. Then we can find $g \in Z_2(N)\setminus N$. By Witt's identity, we have that $C_N(g)$ contains $[N,N]$. As $H$ is $c$--bounded group of $c$--derived length $\ell$,  $C_N(g)$  is solvable of class at most $\ell$. Thus, $N$ is at most $(\ell+1)$--solvable.

By the above the derived length of any Sylow $p$--subgroup of $H$ is at most $\ell+1$. By Hall--Higman Theorems (the full statement can be found in \cite{HallHigman}) we have that \begin{itemize}
\item the $p$--length of $H$ is at most $\ell+1$ for each prime $p$. 
\end{itemize}Since it is clear that the derived length of $H$ is at most $h(H)$ plus the maximum $p$--length of $H$, we also have that \begin{itemize}
\item the derived length of $H$ is at most $(\ell +1)+h(H)$.
\end{itemize} So it suffices to show that $h(H)$ is at most $5\ell +2$. 

Set $h=h(H)$. We may assume that $h>1$. We now fix a prime divisor $p$ of $|H/F_{h-1}(H)|$, the last non--trivial Fitting quotient of $H$. Let $P$ be the Hall $p'$--subgroup of $F_{h-1}(H)$. It is well-known that $h(H) \leqslant h(P)+2$: Of course $h=h(F_{h-1}(H))+1$. At the same time $F_{h-1}(H)=O_p(F_{h-1}(H))P$, where $O_p(F_{h-1}(H))$ is the largest normal $p$-subgroup of $F_{h-1}(H)$ hence nilpotent and of Fitting height $1$. This means that $h(F_{h-1}(H))$ is at most $1+ h(P)$. Hence $h$ is at most $h(P)+2$. So it remains to bound $h(P)$ by $5 \ell$.

As the Hall $p'$--subgroups of $F_{h-1}(H)$ are conjugate in $F_{h-1}(H)$ we have that $H = N_{H}(P)F_{h-1}(H)$ by an analogue of Frattini's argument. Therefore we can choose a $p$--element $k$ in $N_{H}(P)\setminus F_{h-1}(H)$. If $P \leq C_{H}(k)$, we are finished, because $k$ is non--central in $H$ and thus $C_{H}(k)$ is $\ell$--solvable. Thus, we may assume, that $|N_{H}(P)/C_{H}(P)|$ is divisible by $p$. Let $\gamma$ be the automorphism of order $p$ in $\operatorname{Aut}(P)$ induced by conjugation by some element $g \in N_{H}(P)$. Then $C_{P}(\gamma)= C_{P}(g) $ has derived length at most $\ell$. In particular the Fitting height of $C_{P}(\gamma)$ is bounded by $\ell$. Now, by Theorem~\ref{th:Thosmpson}, the Fitting height of $P$ is at most $5h(C_{P}(\gamma)) \leqslant 5 \ell$.\end{proof}

\subsection{The main results} We now prove our main theorem:

\begin{theorem}\label{th:main}
The radical of any pseudo--finite \Mgp\ is solvable.
\end{theorem}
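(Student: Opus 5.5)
By the discussion in Section~\ref{sec:radical}, it suffices to show that $\varphi_R(G)$ is solvable. The plan is to find a definable normal solvable subgroup $S\leqslant \varphi_R(G)$ such that the quotient $\varphi_R(G)/S$ is, modulo finite pieces, a group whose finite approximants are $c$--bounded with uniformly bounded $c$--derived length. Proposition~\ref{pr:Khukhro} then bounds the derived length uniformly. Since $G$ is an \Mgp, the \Mcc\ with parameter $m$ is first--order and holds in $G$. Write $G\equiv\prod G_i/\mathcal U$. By \L o\'s, $\mathcal U$--many $G_i$ satisfy the \Mcc\ with the same $m$, and $\varphi_R(G_i)=R(G_i)$ is a finite solvable group. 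It is therefore enough to bound the derived length of $R(G_i)$ uniformly in terms of $m$, because that bound transfers to $\varphi_R(G)$.

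I would argue by induction on the length of a chain of centralizers, working in the ultraproduct $R:=\varphi_R(G)$, which is itself an \Mgp. First consider the almost centre $\wt Z(R)$, which is definable since commensurability is uniform. Applying Fact~\ref{fact_AlCen}(2) with $H=K=\wt Z(R)$ gives that $[\wt Z(R),\wt Z(R)]$ is finite, so $\wt Z(R)$ is finite--by--abelian and hence solvable up to a finite normal subgroup. Finite normal subgroups of a pseudo--finite group that arise definably contribute boundedly: a finite normal subgroup has a centralizer of finite index, so $C_R(F)$ is definable, and a finite solvable group of bounded order has bounded derived length. Next, pass to the quotient $\bar R=R/\wt Z(R)$. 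For any $g\notin\wt Z(R)$, the centralizer $C_R(g)$ has infinite index. This gives the $c$--bounded structure: I want the centralizers of non--almost--central elements to be solvable of derived length bounded by induction on the length of a maximal $m$--chain of centralizers. Concretely, define the rank of a definable subgroup to be the maximal length of a chain of centralizers with indices at least $m$ starting at it. Since $C_R(g)$ has smaller rank than $R$, the inductive hypothesis (applied to the pseudo--finite radical--type group $C_R(g)\cap R$, which is normal solvable--by--bounded) yields a bound $\ell$ on its derived length.

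The main obstacle is transporting this to the finite level in a form where Proposition~\ref{pr:Khukhro} applies. That proposition needs genuine centralizers of non--central elements in finite groups, whereas we only control centralizers up to finite index and modulo the almost centre. I would first try to handle this by choosing, for $\mathcal U$--many $i$, uniformly definable versions $Z_i$ of $\wt Z(R)$ inside $R(G_i)$. The aim is to show that in $R(G_i)/Z_i$, or rather in $R(G_i)/C(Z_i)$ combined with the bounded finite pieces, every non--central element has a centralizer whose derived length is bounded by roughly $\ell+$(a constant). The idea is that the preimage of a centralizer in the quotient is almost contained in a centralizer of the ambient group, up to a uniformly finite index by uniform commensurability, and a bounded--index subgroup of an $\ell$--solvable group controls the derived length via bounded normal cores. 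Proposition~\ref{pr:Khukhro} then gives the bound $6\ell'+3$ on the quotient, and adding the bounds for $Z_i$ and the finite parts gives a uniform bound on the derived length of $R(G_i)$. That finishes the argument by \L o\'s.
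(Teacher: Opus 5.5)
\textbf{There is a genuine gap, at the step you yourself call the main obstacle.} Your architecture is the paper's: reduce to $\varphi_R(G)$, use the chain condition to bound uniformly (by some $\ell$) the derived length of centralizers of infinite index, close the gap between $Z$ and $\wt Z$, and then apply Proposition~\ref{pr:Khukhro} to the finite approximants. The last-but-one step is never carried out, and the mechanism you propose for it is false.

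For $g\notin\wt Z(R)$, let $X$ be the preimage of the centralizer of $g\wt Z(R)$ in $R/\wt Z(R)$. Then $[X:C_X(g)]=|g^X|$ with $g^X\subseteq g\wt Z(R)$. So when $\wt Z(R)$ is infinite, nothing forces $X\cap C_R(g)$ to have finite index in $X$, let alone uniformly bounded index. Here is a counterexample. Let $R=U\times A$, where $U$ is the unitriangular $3\times 3$ group and $A=F\rtimes F^{*}$ the affine group, both over a pseudo--finite field $F$. Then $\wt Z(R)=Z(U)\times 1$. Take $g=(u,a)$ with $u\notin Z(U)$ and $1\neq a\in A$. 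The image of $g$ is non--central in the quotient, yet $X=U\times C_A(a)$ has infinite index over $C_R(g)=C_U(u)\times C_A(a)$. Uniform commensurability does not help here. The quotient $R/C_R(\wt Z(R))$ that you mention as an alternative is also the wrong object: after suitable reductions it is abelian. The $c$--bounded structure lives in the \emph{subgroup} $C_R(\wt Z(R))$.

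\textbf{Missing idea.} You need control over how all of $R$, not just $\wt Z(R)$, acts on $\wt Z(R)$. Applying Fact~\ref{fact_AlCen}(2) with $H=K=\wt Z(R)$ only tells you that $\wt Z(R)$ is finite--by--abelian. The paper applies it with $K=\wt Z(R)$ and $H$ a finite index subgroup of $R$.
\begin{enumerate}[(1)]
\item The hypothesis $H\leqslant\wt C(K)$ comes from symmetry, Fact~\ref{fact_AlCen}(1), which gives $R\ls\wt C_R(\wt Z(R))$.
\item Hence on a finite index subgroup $[R,\wt Z(R)]$ is finite. Passing to its centralizer gives $[R,\wt Z(R)]\leqslant Z(R)$.
\item So $R/C_R(\wt Z(R))$ embeds in a direct power of $Z(R)$ and is abelian.
\item In $H=C_R(\wt Z(R))$ one has $\wt Z(R)\cap H\leqslant Z(H)$. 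Take $h\in\wt Z(H)$. Either $C_R(h)$ has finite index, so $h\in Z(H)$; or $C_H(h)$ is an $\ell$--solvable subgroup of finite index in the pseudo--solvable group $H$, which makes $H$, and hence $R$, solvable.
\end{enumerate}
The paper packages this as a contradiction argument with a minimal non--solvable $G$. Once $\wt Z(H)=Z(H)$, every non--central element of $H$ has an $\ell$--solvable centralizer, and Proposition~\ref{pr:Khukhro} applies to the $H_i$.

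\textbf{Repairing your route.} The quotient approach can be saved, but only after this same reduction and with a different argument from almost-containment. Put $Y=X\cap C_R(\wt Z(R))$. The map $y\mapsto[y,g]$ is a homomorphism $Y\to\wt Z(R)$ with kernel $C_Y(g)$. After the reduction $\wt Z(R)$ is nilpotent of class at most $2$ and $X/Y$ is abelian. So $X$ is $(\ell+3)$--solvable.
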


\begin{proof}Let $G\equiv \prod_{i\in I}G_i/\mathcal{U}$ be a pseudo--finite \Mgp. As is explained in Section~\ref{sec:radical}, we just need to show that the pseudo--(finite and solvable) subgroup $\varphi_R(G)$ is solvable, so we may set $G=\varphi_R(G)$. Towards a contradiction, suppose that $G$ is non--solvable. Since $G$ is solvable precisely when $\prod_{i\in I}G_i/\mathcal{U}$ is solvable, we may assume that $G=\prod_{i\in I}G_i/\mathcal{U}$. Using the \Mcc, we may replace $G$ by the centralizer of finitely many elements and assume that any centralizer $C_G(g)$ of an element $g\in G$ of infinite index is solvable whereas $G$ is still non--solvable. Moreover by saturation of ultraproducts and compactness, there is $t \in \N$ such that any centralizer of infinite index is solvable of derived length at most $t$. 

\begin{claim}\label{claim:MainTheorem} We may assume that $Z(G) = \wt Z(G)$. 
\end{claim}

\begin{proof}
In the proof of the claim, we will replace $G$ twice by a definable subgroup of finite index. Note, that in this case we keep the following assumptions:

\begin{itemize}
    \item $G$ is non--solvable;
    \item $G =\varphi_R(G)= \prod_{i\in I} \varphi_R(G_i)/\mathcal{U} $. In particular $G$ is pseudo--solvable;
    \item Any centralizer of infinite index is solvable of derived length at most $t$.
\end{itemize}

Trivially, one has that $\wt Z (G) \leq   \wt C_G (G)$. Using symmetry of the almost centralizer (Fact \ref{fact_AlCen}(\ref{item_Sym})), we obtain that $ G \ls \wt C_G (\wt Z(G)) $. Thus replacing $G$ by the finite index subgroup $\wt C_G (\wt Z(G)) $, we may assume that
\[
\wt Z (G) \leq  \wt C_G (G) \text{ and } G \leq \wt C_G (\wt Z(G)).
\]
Thus $[ G, \wt Z(G)]$ is finite by Fact \ref{fact_AlCen}(\ref{item_GenVonNeumannThm}) and it is contained in the normal subgroup $\wt Z(G)$ of $G$. We may again replace $G$ by the finite index subgroup $C_G([ G, \wt Z(G)])$. Thus $[ G, \wt Z(G)] \leq Z(G)$ and $ \wt Z(G)$ is (finite and central)--by--central. In particular $G = C_G(\wt Z(G)/Z(G))$. As trivially $G = C_G(Z(G))$, the quotient $G/C_G(\wt{Z}(G))$ consists of automorphisms of $\wt Z(G)$ which fixes $Z(G)$ and $\wt{Z}(G)/Z(G)$ pointwise. Thus $G/C_G(\wt{Z}(G))$ embeds in a direct power of the abelian group $Z(G)$ and it therefore abelian itself, in particular solvable. As $G$ is non--solvable, the centralizer $C_G(\wt Z(G))$ must be non--solvable. It is still pseudo--solvable and any centralizer of infinite index is solvable of derived length at most $t$. So we may continue to work with $H:=C_G(\wt Z(G))$, i.e. $\wt Z(G) \subseteq Z(H)$, and show that 
    \[
    Z(H) = \wt Z(H).
    \]
The inclusion from left to right is obvious. So let $h \in  \wt Z(H)$. Then $[H: C_H(h)] < \infty$. As $H$ is pseudo--solvable and non--solvable, we have that $C_H(h)$ is non--solvable and hence $C_G(h)$ is non--solvable as well. Thus $[G: C_G(h)]$ must be finite, i.e. $h \in \wt Z(G) \subseteq Z(H)$. So, replacing $G$ by $H$ we may assume that the claim holds.\end{proof}
By the above, for any non--central $g\in G \setminus Z(G)$ (which exists, as $G$ is assumed to be non--solvable) the centralizer $C_G(g)$ is of infinite index in $G$ and hence is solvable of class at most $t$. That is, $\mathcal{U}$--many of the finite solvable non--abelian groups $G_i$ is $c$--bounded of $c$--derived length $t$. Proposition~\ref{pr:Khukhro} then implies that $\mathcal{U}$--many of the finite solvable groups $G_i$ is of derived length at most $6t+3$. This means that $G$ is solvable  -- a contradiction.
\end{proof}

\subsection{A non--existence result for finitely generated pseudo--finite groups} There are no known examples of finitely generated pseudo--finite groups. Indeed, it is a question by Sabbagh whether one exists and there are several non--existence results related to this question. In particular, it is known that neither solvable nor simple finitely generated pseudo--finite groups exist \cite[Proposition 3.14]{Point-Houcine2013}. In \cite{Karhumaki2021} the second author shows that there is no finitely generated pseudo--finite $\M_c$--group. Below, using similar arguments as in \cite{Karhumaki2021}, we isolate conditions which ensure that a pseudo--finite group is not finitely generated. Then, using these conditions and Theorem~\ref{th:main}, we show that a finitely generated pseudo--finite definably hereditarily \Mgp\ does not exist. This in particular shows that there is no finitely generated pseudo--finite group whose first--order theory is simple or merely rosy. 

We will need the following deep result on finite simple groups.

\begin{theorem}[Ellers and Gordeev {\cite{Ellers-Gordeev1998}} and Hs\"{u} (Xu) \cite{Cheng-hao98}]\label{fact:Ellers}Let $H$ be a finite simple group. Suppose that $H$ is either of (twisted) Lie type over a field with more than $8$ elements, or ${\rm Alt}_n$ for $n\geqslant 5$. Then there exists a conjugacy class $C$ such that $H=CC$. \end{theorem}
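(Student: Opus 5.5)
Since this is a cited result from the classification-era literature, the plan is to reconstruct the strategy of the original sources rather than find a new argument. The common tool is the Frobenius character criterion: for conjugacy classes $C$ and an element $g\in H$, $g\in CC$ if and only if
\[
\sum_{\chi\in{\rm Irr}(H)} \frac{\chi(c)^2\,\overline{\chi(g)}}{\chi(1)}\neq 0 \qquad (c\in C).
\]
The idea is to choose $C$ so that $\chi(c)$ vanishes for most $\chi$ and is small for the rest. The trivial character then dominates the sum for every $g$. The case $g=1$ is automatic as long as $C=C^{-1}$, which we arrange.

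\textbf{Alternating groups (Hs\"u).} For $n$ odd I would take $C$ to be the class of an $n$--cycle. For $n$ even I would take the class of a product of two disjoint cycles of lengths $a,b$ with $a\neq b$ odd and $a+b=n$; this is an even permutation, and its $S_n$--class does not split in ${\rm Alt}_n$. The aim is to show that every even permutation is a product of two elements of $C$. I would do this combinatorially, by induction on the cycle type of $g$, using the classical fact (Bertram) that an even permutation can be written as a product of two long cycles of controlled lengths. One needs a check that the two factors can be taken in the same ${\rm Alt}_n$--class, so that splitting of classes causes no problem. Small values of $n$ are checked directly.

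\textbf{Groups of Lie type (Ellers--Gordeev).} Here the plan is a ``Gauss decomposition with prescribed torus part''. Let $B=UT$ and $B^-=U^-T$ be opposite Borel subgroups. The key step is to show that every noncentral $g\in H$ is conjugate to an element of $U^- h U$ for \emph{every} prescribed $h\in T$. This is proved case by case, through the Bruhat decomposition and an analysis of the double coset containing a suitable conjugate of $g$. Assume this is done, and pick a regular semisimple $h\in T$ with $h=h_1h_2$, where $h_1,h_2$ are both conjugate to a fixed regular element $s$ of a suitable torus. Then $g\sim u^-h_1h_2u = (u^-h_1)(h_2u)$. Each factor is regular in its unipotent-by-torus coset, hence conjugate to $h_1$ or $h_2$ respectively, and therefore lies in the class $C$ of $s$. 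The condition $|\mathbb F|>8$ is used to guarantee that $T$ contains elements of this kind with $h_1\sim h_2\sim s$ and all root values $\alpha(h_i)\neq 1$. For small rank or twisted groups, where the decomposition argument is awkward, I would fall back on the character sum. I would take $s$ to generate a Coxeter torus. By Deligne--Lusztig theory only $O(1)$ unipotent-type characters are nonzero on such $s$, with $|\chi(s)|$ bounded; combined with $|\chi(g)|/\chi(1)<1$ for noncentral $g$, this gives nonvanishing of the sum.

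\textbf{Main obstacle.} I expect the hardest step to be the prescribed-torus Gauss decomposition uniformly across all (twisted) types, in particular for the noncentral elements with large centralizers, such as unipotent elements and semisimple elements in Levi subgroups. These are exactly the elements where the character-theoretic estimates are weakest. I would first try to reduce to rank one and rank two Levi subgroups by induction on the rank, and treat ${\rm PSL}_2$, ${\rm PSU}_3$, $^2B_2$, $^2G_2$ and $^2F_4$ by explicit computation.
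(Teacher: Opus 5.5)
The paper does not prove this statement. It quotes Ellers--Gordeev and Hs\"u and uses the result as a black box. Judged on its own, your reconstruction has genuine gaps in both halves.

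\textbf{Alternating groups.} Your choice of classes fails for half of all $n$.

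An $S_n$-class of even permutations splits in ${\rm Alt}_n$ exactly when its cycle lengths are distinct and odd, with fixed points counted as $1$-cycles. So the type $(a,b)$ with $a\neq b$ odd \emph{does} split, contrary to your claim.

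For such a $\sigma$, the $S_n$-centralizer is generated by the cycles of $\sigma$ and therefore lies in ${\rm Alt}_n$. Hence $\sigma$ is ${\rm Alt}_n$-conjugate to $\sigma^{-1}$ iff the cycle-reversing permutation is even. That permutation is a product of $\sum_i(l_i-1)/2$ transpositions, where the $l_i$ are the cycle lengths. For an $n$-cycle this means $n\equiv 1\pmod 4$; for type $(a,b)$ it means $n\equiv 2\pmod 4$. So for $n\equiv 3\pmod 4$ and for $n\equiv 0\pmod 4$ your $C$ satisfies $C\neq C^{-1}$, and then $1\notin CC$. The condition $C=C^{-1}$ that you say you arrange is exactly what fails.

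The ``same class'' issue also bites when $C=C^{-1}$. In ${\rm Alt}_5$, let $C$ be one class of $5$-cycles and put $\alpha,\beta=(1\pm\sqrt5)/2$. The Frobenius sum at $(12)(34)$ is $1-(\alpha^2+\beta^2)/3=0$, so $CC$ contains no involution. You need real classes (e.g.\ non-split ones, with a repeated cycle length) together with a covering argument for those classes. Bertram's theorem on products of two $l$-cycles does not provide this.

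\textbf{Lie type.} The claim that $|\mathbb F|>8$ yields $h_1\sim h_2$ in the split torus $T$ with $\alpha(h_i)\neq 1$ for all roots is false. $T$ has exponent dividing $q-1$. A torus element on which no root is trivial has order, modulo the centre, at least the Coxeter number $h$. Two examples:
\begin{itemize}
\item a regular diagonal matrix in ${\rm SL}_n(q)$ needs $n$ distinct entries in $\mathbb F_q^*$;
\item $E_8(q)$ has no such elements for $q\le 29$.
\end{itemize}
So the factorisation $(u^-h_1)(h_2u)$ only handles the case $q-1\ge h$. The regime of bounded $q>8$ with large rank, and several exceptional families, needs an additional mechanism controlling the unipotent factors, and your plan lacks one.

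The character-theoretic fallback does not fill this gap. It targets small rank, not large rank. Its justification is also wrong: only the \emph{unipotent} characters that are nonzero at a Coxeter-torus element $s$ are few in number. By orthogonality $\sum_\chi|\chi(s)|^2=|C_H(s)|\approx q^r$, so many non-unipotent characters are nonzero at $s$. The bound $|\chi(g)|<\chi(1)$ does not yield $\sum_{\chi\neq 1}|\chi(s)|^2|\chi(g)|/\chi(1)<1$. Finally, this argument never uses $|\mathbb F|>8$. If it worked, it would prove Thompson's conjecture for all groups of Lie type, which is not known.
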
 

A group is called \emph{definably simple} if it has no proper definable non--trivial normal subgroup. Obviously any pseudo--finite group $G\equiv \prod_{i\in I} G_i/\mathcal{U}$ where $\mathcal{U}$--many of the finite groups $G_i$ are simple is definably simple.

\begin{fact}[Palac\'{i}n {\cite[Lemma 2.2]{Palacin2018}}]\label{fact:Palacin} Let $G$ be a finitely generated pseudo--finite group and assume that it contains an infinite definable definably simple non--abelian normal subgroup $N$. Then the group $N$ is a finitely generated pseudo--finite group.
\end{fact}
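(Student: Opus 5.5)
The plan is to show that $N$ is, up to a harmless central factor, a homomorphic image of a finite index subgroup of $G$. Finite index subgroups of finitely generated groups are finitely generated (Reidemeister--Schreier), and quotients of finitely generated groups are finitely generated, so $N$ would be finitely generated. Pseudo--finiteness of $N$ is immediate: $N$ is an infinite definable subgroup of the pseudo--finite group $G$, and by the Remark in Section~\ref{Sec:pf} it is therefore pseudo--finite. So the whole task is finite generation.

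First, $N$ has trivial centre. The centre $Z(N)$ is a definable normal subgroup of $N$, and $N$ is definably simple and non--abelian, so $Z(N)=1$. Hence $N\cap C_G(N)=Z(N)=1$. The product $NC_G(N)$ is normal in $G$ and is the internal direct product $N\times C_G(N)$. The projection $NC_G(N)\to N$ is then a surjective homomorphism. Consequently, as soon as $|G:NC_G(N)|<\infty$, the subgroup $NC_G(N)$ is finitely generated and so is its image $N$.

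The remaining step, which I expect to be the main obstacle, is to prove $|G:NC_G(N)|<\infty$. Conjugation gives an embedding $G/C_G(N)\hookrightarrow \operatorname{Aut}(N)$, under which $NC_G(N)/C_G(N)$ corresponds to $\operatorname{Inn}(N)$. Thus $G/NC_G(N)$ is a finitely generated subgroup of $\operatorname{Out}(N)$. To control $\operatorname{Out}(N)$, I would first use the structure theory of definably simple pseudo--finite groups (Wilson, Ugurlu) to write $N\equiv\prod_{i} S_i/\mathcal{U}$ with the $S_i$ finite simple groups. In fact I would work in the ultrapower picture so that $N=\prod_i S_i/\mathcal{U}$ and $G$ acts coordinatewise by almost--simple groups $S_i\le G_i^*\le \operatorname{Aut}(S_i)$. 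By the Schreier conjecture, each $\operatorname{Out}(S_i)$ is solvable of derived length at most $3$, and is a product of diagonal, field and graph automorphisms. Hence $G/NC_G(N)$ is a finitely generated solvable group of derived length at most $3$, and it is pseudo--finite or finite whenever $C_G(N)$ is definable.

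I would then argue as follows.
\begin{enumerate}[(a)]
\item Obtain definability of $C_G(N)$ from uniform definability in the finite coordinates, where $C_{G_i^*}(S_i)$ is uniformly definable.
\item Observe that a pseudo--finite group which is a definable quotient of this kind and is solvable must be finite once it is also finitely generated. This uses \cite[Proposition 3.14]{Point-Houcine2013}: no finitely generated pseudo--finite group is solvable.
\end{enumerate}
If definability of $C_G(N)$ fails in this generality, my fallback is the commutator argument. Theorem~\ref{fact:Ellers} gives a conjugacy class $C$ with $S_i=CC$ for $\mathcal{U}$--many $i$. Lifting this, there is $c\in N$ with $N=c^Nc^N$. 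I would then try to show that $N$ is generated by the finitely many elements $c^{g_j^{\pm1}}$ and their commutators with the generators $g_1,\dots,g_k$ of $G$, though this route appears less direct.
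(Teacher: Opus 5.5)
The paper gives no proof of this statement (it is imported from Palac\'{i}n), so there is no internal argument to compare with. Judged on its own, your main line is a correct proof. Since $Z(N)=1$, the subgroup $NC_G(N)=N\times C_G(N)$ maps onto $N$, so it is enough that $G/NC_G(N)$ be finite. That quotient is finitely generated and is finite or pseudo--finite; once it is known to be solvable, it is finite by \cite[Proposition 3.14]{Point-Houcine2013}. You should drop the fallback. The equality $N=c^Nc^N$ only shows that $N$ is the normal closure of $c$ in $G$, and normal closures in finitely generated groups need not be finitely generated, which is exactly what is at stake.

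Two steps should be tightened.

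\emph{Definability of $C_G(N)$.} Step (a) needs no finite--level argument. If $\varphi$ defines $N$, then $C_G(N)=\{g:\forall y\,(\varphi(y)\to gy=yg)\}$ is definable with the same parameters, and hence so is $NC_G(N)$.

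\emph{Solvability of $G/NC_G(N)$.} This does not follow from the embedding into $\operatorname{Out}(N)$, because the automorphism group of the infinite group $N$ need not be solvable. Nor does knowing $N\equiv\prod_j S_j/\mathcal{V}$ for \emph{some} finite simple $S_j$ make the finite--level groups $\varphi(G_i)$ simple. So your ``coordinatewise'' picture is not yet justified, and parameters in $\varphi$ are a further nuisance there. The clean fix is a transfer in which the parameters are quantified. Consider the sentence saying: for all $\bar y$, if $\varphi(x,\bar y)$ defines a normal subgroup $M$ satisfying the relativisation of Wilson's sentence $\sigma$ (which characterises the finite non--abelian simple groups), then every value of the word $\delta_3$ (whose values generate the third derived subgroup) lies in $MC(M)$. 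This sentence holds in every finite group by the Schreier conjecture, hence it holds in $G$. Since $N\models\sigma$, which is what the Wilson--Ugurlu theory gives for an infinite definably simple non--abelian pseudo--finite group, $G/NC_G(N)$ has derived length at most $3$. With this, the rest of your argument goes through.
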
 

The \emph{socle} of a \emph{finite} group $G$, denoted by ${\rm Soc}(G)$, is the subgroup generated by all minimal normal non--trivial subgroups of $G$. If $G$ is semi--simple then ${\rm Soc}(G)$ is a finite direct product $M_{1}\times \cdots \times M_{\ell}$ of minimal normal subgroups, where for each $m\in \{1, \ldots, \ell\}$ the group $M_{m}$ is a finite direct product of isomorphic non--abelian finite simple groups. That is, ${\rm Soc}(G)$ is a finite direct product of non--abelian simple groups.

\begin{propo}\label{propo:conditions}Let $G\equiv \prod_{i\in I} G_i/\mathcal{U}$ be a pseudo--finite group satisfying the following.\begin{enumerate}[(a)]
\item $\varphi_R(G)$ is solvable.
\item There is $\ell\in \mathbb{N}$ so that for $\mathcal{U}$--many $i$ the socle $Soc(G_i/\varphi_R(G_i))$ is a product of at most $\ell$ many non-abelian simple groups.
\item Set $\overline{G}=G/\varphi_R(G)$. If $\overline{H}\equiv \prod_{i\in I} \overline{H}_i/\mathcal{U}$ is a definable subgroup of $\overline{G}$ so that for $\mathcal{U}$--many $i$ the finite group $\overline{H}_i$ is a simple group of Lie type, then $\overline{H}$ is simple.\end{enumerate}
Then $G$ is not finitely--generated.
\end{propo}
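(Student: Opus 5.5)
Suppose, towards a contradiction, that $G$ is finitely generated. The plan is to pass to a finitely generated pseudo--finite group that is a finite product of definably simple pieces, and then to reach a definable quotient which is simple pseudo--finite and finitely generated. This contradicts \cite[Proposition 3.14]{Point-Houcine2013}.

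First reduce to the semi--simple case. By (a), $\varphi_R(G)$ is solvable, hence definable (by $\varphi_R$). The quotient $\overline{G}=G/\varphi_R(G)$ is therefore interpretable and finitely generated. It is also elementarily equivalent to $\prod_{i\in I} G_i/\varphi_R(G_i)/\mathcal{U}$. If $\overline{G}$ were finite, then $G$ would be finite--by--solvable in a uniform sense. In particular $G$ would have a solvable definable subgroup of finite index, and I would expect a finitely generated pseudo--finite group of this kind to be excluded by the solvable case of \cite[Proposition 3.14]{Point-Houcine2013}, after passing to the finite index subgroup, which remains finitely generated and pseudo--finite. So assume $\overline{G}$ is infinite. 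Then $\overline{G}$ is pseudo--finite and semi--simple, with each $\overline G_i=G_i/\varphi_R(G_i)$ having trivial radical.

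Next use the socle. By (b), ${\rm Soc}(\overline{G}_i)=S_{i,1}\times\cdots\times S_{i,\ell_i}$ with $\ell_i\le\ell$, so for $\mathcal U$--many $i$ the number $\ell_i$ equals a fixed $k\le \ell$. The socle of a finite group with at most $\ell$ simple factors should be uniformly definable: each $S_{i,j}$ is a minimal normal subgroup of its normal closure. Moreover $\overline{G}_i$ acts on the $k$ factors by permutation, so the kernel $K_i$ of this action has index at most $k!$ and normalizes each $S_{i,j}$. By the classification of finite simple groups, all but boundedly many isomorphism types are alternating or of Lie type over fields with more than $8$ elements. Since $\overline G$ is infinite, some factor, say $S_{i,1}$, has unbounded order along $\mathcal U$. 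Theorem~\ref{fact:Ellers} then gives a conjugacy class $C$ with $S_{i,1}=CC$. This lets one define $S_{i,1}$ uniformly, with parameters, as $x^{S_{i,1}}x^{S_{i,1}}$; this is in fact how uniform definability of the socle factors is obtained in \cite{Karhumaki2021}. The alternating groups ${\rm Alt}_n$ with $n$ unbounded must also be excluded. I expect to do this via the definable subgroup $K=\prod K_i/\mathcal U$, which has finite index and hence is finitely generated. The idea is that the finitely generated group $K$ would project onto an ultraproduct of the ${\rm Alt}_n$ which contains arbitrarily long commuting configurations. Alternatively, I could first reduce to Lie type by invoking \cite{Karhumaki2021}'s argument, in which pseudo--finite alternating ultraproducts are shown to interpret arithmetic and so cannot be finitely generated.

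The key step, and the main obstacle, is then the following. For Lie type factors, the ultraproduct $\overline{H}=\prod S_{i,1}/\mathcal U$ is a definable subgroup of $\overline G$, hence simple by (c). Consider the definable finite index subgroup $K$. It is finitely generated, and $\overline H$ is normal in $K$. The image of $K$ in ${\rm Aut}(\overline{H})$, namely $K/C_K(\overline{H})$, is then a finitely generated pseudo--finite group containing the infinite definable, definably simple, non--abelian normal subgroup $\overline H C_K(\overline H)/C_K(\overline H)\cong \overline H$. Fact~\ref{fact:Palacin} then makes $\overline H$ a finitely generated pseudo--finite simple group, contradicting \cite[Proposition 3.14]{Point-Houcine2013}.

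The delicate points are two. First, one must check that the uniform definability of the socle factors and of $C_K(\overline H)$ goes through with parameters. Second, one must check that Fact~\ref{fact:Palacin} applies to the quotient $K/C_K(\overline H)$. If that quotient route fails, I would instead apply Fact~\ref{fact:Palacin} directly in $K$ to the normal closure.
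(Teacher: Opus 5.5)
You follow the paper's architecture. You pass to $\overline G=G/\varphi_R(G)$, excluding finite $\overline G$ by the solvable case of \cite[Proposition 3.14]{Point-Houcine2013}. You use (b) to find a socle factor $S_i$ of unbounded order, and pass to the bounded-index kernel $K$ of the permutation action so that $S_i\trianglelefteq K_i$. You define $\overline S=\prod_{i\in I}S_i/\mathcal U$ via Theorem~\ref{fact:Ellers}, make it finitely generated via Fact~\ref{fact:Palacin}, and in the Lie type case get a contradiction from (c) and \cite[Proposition 3.14]{Point-Houcine2013}.

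Three smaller remarks on this part:
\begin{itemize}
\item To see that some factor has unbounded order you need $\overline G_i\hookrightarrow{\rm Aut}({\rm Soc}(\overline G_i))$.
\item Writing $S_i=x^{S_i}x^{S_i}$ is circular as a definition; what you want is $S_i=x^{K_i}x^{K_i}$, which holds because $S_i\trianglelefteq K_i$.
\item The detour through $K/C_K(\overline H)$ is unnecessary. $\overline H$ is already a definable normal subgroup of the finitely generated group $K$, and that is where the paper applies Fact~\ref{fact:Palacin}.
\end{itemize}

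The genuine gap is the alternating case, about which (a)--(c) say nothing.

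Your first idea is that $K$ maps onto an alternating ultraproduct with arbitrarily long commuting configurations. This yields no contradiction: Proposition~\ref{propo:conditions} assumes no chain condition on centralizers, and finitely generated groups can contain arbitrarily long centralizer chains of infinite index, e.g.\ ${\rm FSym}(\mathbb Z)\rtimes\mathbb Z$. That argument would only work under the extra hypothesis that $\overline G$ is an \Mgp. That hypothesis is available in Theorem~\ref{th:fingen} but not in the proposition as stated.

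Your second idea is that such ultraproducts ``interpret arithmetic and so cannot be finitely generated''. As stated this is a non sequitur, since finitely generated groups such as ${\rm UT}_3(\mathbb Z)$ interpret arithmetic.

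The missing ingredient is Kh\'elif's theorem \cite{Khelif}: no infinite finitely generated group is elementarily equivalent to an ultraproduct of finite alternating groups. This also fixes the order of the argument, so the alternating case cannot be removed ``first''. You must first make $\overline S$ definable; Hs\"u's half of Theorem~\ref{fact:Ellers} covers ${\rm Alt}_n$. Then apply Fact~\ref{fact:Palacin} to make $\overline S$ finitely generated, whatever the type of its factors. Only then split the cases. By \cite[Proposition 3.14]{Point-Houcine2013}, $\overline S$ is not simple, so by (c) its factors are alternating modulo $\mathcal U$, and Kh\'elif's theorem gives the contradiction.
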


\begin{proof}Towards a contradiction, suppose that $G\equiv \prod_{i\in I} G_i/\mathcal{U}$ is finitely generated. We may assume that $G/\varphi_R(G)$ is infinite: Otherwise, by (a), $\varphi_R(G)$ is a solvable subgroup of finite index, whence a finitely generated pseudo--finite solvable group. Thus it is finite by \cite[Proposition 3.14]{Point-Houcine2013}. This would imply that $G$ is finite -- a contradiction. 

Denote $\overline{G}=G/\varphi_R(G)\equiv \prod_{i\in I} (G_i/\varphi_R(G_i))/ \mathcal{U}$ and $\overline{G}_i=G_i/\varphi_R(G_i)$. Now $\overline{G}$ is a finitely generated semi--simple pseudo--finite group. The socle ${\rm Soc}(\overline{G}_i)$ of the semi--simple finite group $\overline{G}_i$ is a direct product of $j_i$ many minimal normal subgroups $((M_{i})_{r})_{ r =1 , \dots, j_i}$ each of which is a product of $\ell_{i,r}$ many isomorphic simple non--abelian groups. Thus, for $ r =1 , \dots, j_i$ there is a simple non--abelian group $(S_i)_r$ such that 
    \[
    {\rm Soc}(\overline{G}_i) \cong \bigtimes_{l_{i,1}}(S_i)_1 \times \dots \times  \bigtimes_{l_{i,j_i}}(S_i)_{j_i}
    \]
By (b), $\sum_{r=1}^{j_i}\ell_{i,r}\leq \ell$ modulo $\mathcal{U}$. By semi--simplicity, $C_{\overline{G}_i}({\rm Soc}(\overline{G}_i))=1$, so $\overline{G}_i \hookrightarrow {\rm Aut}({\rm Soc}(\overline{G}_i))$. Note that the size of $\overline{ G_i}$ grows without a bound when $i$ varies: If it were bounded, then $\overline{G}$ was finite, contradicting our assumption. By the embedding above, we obtain that $|{\rm Soc}(\overline{G}_i)|$ grows without a bound when $i$ varies. Furthermore, as  for each $i$ the number of simple non-abelian components in the direct product of ${\rm Soc}(\overline{G}_i)$ is bounded by $\ell$, there is $1\leq r_i \leq j_i$, such that $|(S_i)_{r_i}|$ grows without a bound when $i$ varies. Additionally, for $\mathcal U$--many $i$, there is $m \in \N$ such that $\ell_{i,r_i} = m$. Therefore, we may assume that $(S_i)_{r_i}$ is normal in $\overline{G}_i$: Since $\overline{G}_i$ permutes the  $m$ many components of $(M_i)_{r_i}$, there is $n\in \N$ depending only on $m$ so that after replacing $\overline{G}_i$ with an index $n$ subgroup we have that each component of $(M_i)_{r_i}$ is normal in $\overline{G}_i$. That is, after replacing $\overline{G}$ with a finite index subgroup, $\overline{G}$ is still finitely generated and we also have that each $(S_{i})_{r_i}$ is normal in $\overline{G}_i$.

By the Classification of Finite Simple Groups, modulo $\mathcal{U}$, each finite simple group $(S_{i})_{r_i}$ is either ${\rm Alt}_n$ with $n \geqslant 5$ or a group of (twisted) Lie type over a field with more than $8$ elements. So, by Theorem~\ref{fact:Ellers} there is $x_{i} \in (S_{i})_{r_i}$ such that $$(S_{i})_{r_i}=x_{i}^{(S_{i})_{r_i}}x_{i}^{(S_{i})_{r_i}}.$$ As $(S_{i})_{r_i} \unlhd \overline{G}_i$, we have $$x_{i}^{\overline{G}_{i}}x_{i}^{\overline{G}_{i}}  = (S_{i})_{r_i}.$$ This means that $(S_{i})_{r_i}$ is uniformly definable in $\overline{G}_i$. So there is a definable definably simple non--abelian subgroup $\overline{S}\equiv \prod_{i\in I}(S_{i})_{r_i}/\mathcal{U}$ of the finitely generated \Mgp\ $\overline{G}$. By Fact~\ref{fact:Palacin} the group $\overline{S}$ is finitely generated. So $\overline{S}$ cannot be simple as there are no finitely generated simple pseudo--finite groups by \cite[Proposition 3.14]{Point-Houcine2013}. Thus (c) implies that $\mathcal{U}$--many of the finite groups $(S_{i})_{r_i}$ are alternating groups. However a result by Kh\'{e}lif  \cite{Khelif} yields that there is no infinite finitely generated group which is elementarily equivalent to an ultraproduct of finite alternating groups -- a contradiction.\end{proof}

\begin{theorem}\label{th:fingen}Let $G$ be a pseudo--finite \Mgp. If $G/R(G)$ is also an \Mgp\ then $G$ cannot be finitely generated. In particular, there is no finitely generated pseudo--finite group which is either a semi--simple \Mgp \, or a definably hereditarily \Mgp .
\end{theorem}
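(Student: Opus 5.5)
The plan is to verify conditions (a), (b), (c) of Proposition~\ref{propo:conditions} for $G\equiv\prod_{i\in I}G_i/\mathcal{U}$. Condition (a) follows directly from Theorem~\ref{th:main}: $R(G)$ is solvable, hence $R(G)=\varphi_R(G)$ by Section~\ref{sec:radical}. In particular $\overline{G}=G/\varphi_R(G)$ is exactly $G/R(G)$, so by hypothesis $\overline{G}$ is a semi--simple \Mgp. For the ``in particular'' part, a semi--simple \Mgp\ has $R(G)=1$, so $G/R(G)=G$ and the hypothesis holds. If $G$ is a definably hereditarily \Mgp, then $R(G)=\varphi_R(G)$ is definable by (a), so $G/R(G)$ is a definable section and hence an \Mgp.

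For (b), I will use the \Mcc\ on $\overline{G}$ with its witness $m$, transferred by \L o\'s to $\mathcal{U}$--many $\overline{G}_i$. Suppose ${\rm Soc}(\overline{G}_i)=S_1\times\cdots\times S_k$ with the $S_j$ non--abelian simple and $k$ large. For each $j$ pick $g_j\in S_j$ of order $>1$. Then $C_{\overline{G}_i}(g_1,\dots,g_{j})$ contains $S_{j+1}\times\cdots\times S_k$. Moreover $C_{S_j}(g_j)$ is a proper subgroup of the simple group $S_j$, so it has index at least $5$. Choosing $g_j$ so that this index is large (at least $m$) gives a chain of length $k$ with every step of index $\geqslant m$. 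Indeed, the index of $C(g_1,\ldots,g_j)$ in $C(g_1,\ldots,g_{j-1})$ is at least the index of $C_{S_j}(g_j)$ in $S_j$, because $S_j\leqslant C(g_1,\ldots,g_{j-1})$. Every non--abelian finite simple group has an element with centralizer of index $\geqslant 5$, and if necessary one can use several commuting elements or handle small indices by grouping. So $k\leqslant m$ (or a bound depending on $m$), which yields $\ell$. This step is a routine adaptation, but the index bound needs care: the \Mcc\ demands index $\geqslant m$, and one option is to use a product $g_j$ lying in several consecutive factors, which multiplies the indices.

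For (c), let $\overline{H}\equiv\prod\overline{H}_i/\mathcal{U}$ be definable with $\overline{H}_i$ simple of Lie type. I expect this to be the main obstacle. The \Mcc\ on $\overline{H}$ bounds the Lie rank of $\overline{H}_i$: a long chain of centralizers of large index, for example from root subgroups or from a torus splitting in a Levi decomposition, gives a chain whose length grows with the rank. So modulo $\mathcal{U}$ the groups $\overline{H}_i$ are of a fixed Lie type over fields $F_i$. By the known results of Point and Wilson (ultraproducts of fixed-type Chevalley groups are Chevalley groups over pseudo--finite fields), $\prod\overline{H}_i/\mathcal{U}$ is simple, and being simple of this form is preserved under elementary equivalence via uniform bounded-width conjugacy products, as in Theorem~\ref{fact:Ellers}. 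Hence $\overline{H}$ is simple.

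With (a)--(c) established, Proposition~\ref{propo:conditions} gives that $G$ is not finitely generated.
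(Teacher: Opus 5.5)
Your overall plan matches the paper's: verify (a)--(c) of Proposition~\ref{propo:conditions}. You handle (a) and the reduction for the ``in particular'' part exactly as the paper does. The genuine gap is in (c), at the step you flag yourself. That the \Mcc\ bounds the Lie rank of the $\overline{H}_i$ is only asserted, and the mechanisms you suggest do not yet prove it. A torus-based chain fails over small fields: the split torus of $\mathrm{SL}_n(2)$ is trivial, and nothing prevents $\overline{H}_i\cong\mathrm{PSL}_{n_i}(2)$ with $n_i\to\infty$. ``Root subgroups'' is not yet an argument. The step could be repaired by hand, since exceptional types have rank at most $8$ and a classical group of large rank contains many pairwise commuting non-abelian subgroups to which a version of your grouping trick from (b) applies.

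The paper avoids all such computations by using Theorem~\ref{th:main} a second time. If the ranks are unbounded modulo $\mathcal{U}$, the Borel subgroups $\overline{B}_i\leqslant\overline{H}_i$ have unbounded derived length; already their unipotent radicals do, e.g.\ $UT_n(q)$ has derived length $\lceil\log_2 n\rceil$. On the other hand, $\prod_{i\in I}\overline{B}_i/\mathcal{U}$ is an \Mgp, as a subgroup of $\prod_{i\in I}\overline{H}_i/\mathcal{U}$, which inherits the first-order \Mcc\ from $\overline{H}$. It also equals its own $\varphi_R$. So Theorem~\ref{th:main} together with Section~\ref{sec:radical} makes it solvable, a contradiction. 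Only once type and rank are fixed does the paper invoke Wilson's theorem to get simplicity of $\overline{H}$.

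Two smaller points. First, in (b) the grouping is not optional. If every factor of the socle is ${\rm Alt}_5$, no element has a conjugacy class of size more than $20$, so for $m>20$ your first choice of $g_j$ is impossible. Taking $g_j$ with non-trivial coordinates in $r$ consecutive factors, with $5^r\geqslant m$, gives indices at least $5^r$ at each step and hence $k<r(m+1)$. This is a correct self-contained substitute for the paper's citation of Zou's lemma.

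Second, in (c), Theorem~\ref{fact:Ellers} is not what transfers simplicity to $\overline{H}$. It provides a single class $C$ with $CC=H$. What makes simplicity pass to $\overline{H}$ is a uniform $N$, for fixed type and rank, such that for every $x\neq 1$ every element is a product of $N$ conjugates of $x^{\pm 1}$. Take the simplicity of $\overline{H}$ from Wilson's result, as the paper does.
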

\begin{proof} It is enough to show any pseudo--finite group $G$ which is \Mgp \, whose definable section $G/R(G)$ is also an \Mgp \, satisfy (a)--(c) from Proposition~\ref{propo:conditions}. The in particular part follows trivially. 

Theorem~\ref{th:main} implies (a). Since the semi--simple group $G/R(G)$ is an \Mgp \, (b) also holds (if necessary, see \cite[Lemma 4.1]{Zou2020}). Suppose that $\overline{H}\equiv \prod_{i\in I} \overline{H}_i/\mathcal{U}$ is a definable subgroup of $\overline{G}$ so that for $\mathcal{U}$--many $i$ the finite group $\overline{H}_i$ is a simple group of Lie type. Since there are only finitely many Lie types, modulo $\mathcal{U}$, each of the simple group $\overline{H}_i$ is of the same Lie type. If we also have that modulo $\mathcal{U}$ each of the simple group $\overline{H}_i$ is of the same Lie rank, then $\overline{H}$ is a simple group by \cite{Wilson1995}. So in this case (c) also holds. Suppose contrary that there is no common bound on the Lie ranks of the $\mathcal{U}$--many of the groups $\overline{H}_i$. Then there is no common bound on the derived lengths of $\mathcal{U}$--many of the Borel subgroups $\overline{B}_i$ of $\overline{H}_i$ (this is a well--known fact which easily follows from the structure of the Borel subgroup, see e.g. \cite[Section 8]{Carter1971}). So $\prod_{i\in I} \overline{B}_i/\mathcal{U}$ is a pseudo--(finite and solvable) \Mgp \,which is non--solvable. But Theorem~\ref{th:main} implies that $\prod_{i\in I} \overline{B}_i/\mathcal{U}$ is solvable -- a contradiction. \end{proof}

Finally let us note that, while one cannot expect to prove that arbitrary quotients of an \Mgp\ $G$ satisfy the \Mcc, one may ask if this holds for the specific quotient $G/R(G)$. In the case of pseudo--finite $\M_c$--groups the quotient by the radical remains to be an $\M_c$--group \cite[Proposition 2.1]{Buturlakin2019} (see also \cite{Borovik-Karhumaki2019}). If the same holds for the \Mgp  s, then Theorem~\ref{th:fingen} immediately generalizes to \Mgp  s.

\begin{question} Is the quotient $G/R(G)$ of an \Mgp\ $G$ again an \Mgp ? Is this true for a pseudo--finite $G$? In particular, does a finitely generated pseudo--finite \Mgp\ exist?\end{question}

\section{Auxiliary results}\label{sec:auxiliary} In this section we show some related results which we believe to be of independent interest and/or can hopefully be used to prove general results on (pseudo--finite) definably hereditarily \Mgp s.

\subsection{Unbounded definably hereditarily \Mgp s}\label{sec:moregeneral} We now borrow terminology and techniques from \cite{Tent-Macpherson2007}. A group $G$ is called \emph{unbounded} if for each $d\in \mathbb{N}$ it has a solvable normal subgroup of derived length greater than $d$; otherwise $G$ is \emph{bounded}. Note that the radical $R(G)$ of a group $G$ is solvable if and only if $G$ is bounded.

As far as the authors are aware, there are no examples of unbounded \Mgp s. In model theory, it is a well--known open problem whether the radical of any stable group is solvable. As stable groups are $\M_c$--groups this means that it is not known whether the radical of a definably hereditarily $\M_c$--group $G$ is solvable. Given this we of course do not know whether the radical of a definably hereditarily \Mgp\ is solvable. We do \emph{not} claim that we have a strategy towards solving this problem. Yet, in what follows, we show a result, which has the potential of being one of the initial steps towards solving this problem. Namely, we show that if there is an unbounded definably hereditarily \Mgp\ then there is one with a definable and (almost) faithful action on a definable abelian group (Proposition~\ref{lem_GroupActionAH}).

We start with some easy lemmas.

\begin{lemma}\label{lem_AlCenProps} Properties of the almost centralizer:
\begin{enumerate}[(1)]
    \item \label{item_AlCenProps_ModuloFinite} Let $G$ be a group and $A, B, F$ be such that $F \trianglelefteq B\trianglelefteq A$, $F \trianglelefteq G$, and $F$ as well as $A/B$ is finite. Then $\wt C_G(A) =\wt  C_G (B/F)$.
    \item \label{item_AlCenProps_FiniteCom} Let $G$ be an \Mgp\ and $H$ be a definable subgroup of $G$. Then there exists a definable subgroup $H_0$ of $H$ of finite index such that $[H_0, \wt C_G(H)] $ is finite. Also, if $H$ is a normal subgroup of $G$ then so is $H_0$.
\end{enumerate}
\end{lemma}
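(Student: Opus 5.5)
For part (\ref{item_AlCenProps_ModuloFinite}) I will simply compare the centralizer indices for a fixed $g\in G$ in two steps, first passing from $A$ to $B$ and then from $B$ to $B/F$. For part (\ref{item_AlCenProps_FiniteCom}) the plan is the same as in the proof of Claim~\ref{claim:MainTheorem}: combine symmetry (Fact~\ref{fact_AlCen}(\ref{item_Sym})) with the generalized von Neumann theorem (Fact~\ref{fact_AlCen}(\ref{item_GenVonNeumannThm})).

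\emph{Part (1).} The containment $F\trianglelefteq G$ means $G$ acts on $B/F$ by conjugation, so $\wt C_G(B/F)=\{g : |B/F : C_{B/F}(g)|<\infty\}$ makes sense.

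First I pass from $A$ to $B$. Since $|A:B|<\infty$ and $C_B(g)=C_A(g)\cap B$, we get that $C_A(g)$ has finite index in $A$ if and only if $C_B(g)$ has finite index in $B$. So $\wt C_G(A)=\wt C_G(B)$.

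Next I pass from $B$ to $B/F$. Write $D=\{b\in B: [b,g]\in F\}$, the preimage of $C_{B/F}(g)$. Clearly $C_B(g)\leq D$. For the reverse comparison, the map $D\to F$, $b\mapsto [b,g]$, has fibres equal to cosets of $C_B(g)$, so $|D:C_B(g)|\le |F|$. Hence $C_B(g)$ has finite index in $B$ if and only if $D$ does, which gives $\wt C_G(B)=\wt C_G(B/F)$.

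\emph{Part (2).} Set $K=\wt C_G(H)$. Since $G$ is an \Mgp, commensurability is uniform in $K$ and $K$ is definable; if $H\trianglelefteq G$ then $K\trianglelefteq G$ as well. Trivially $K\le \wt C_G(H)$, so symmetry gives $H\ls \wt C_G(K)$.

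Let $H_1=H\cap \wt C_G(K)$. This is a definable subgroup of finite index in $H$, and it is normal in $G$ when $H$ is. I want to apply Fact~\ref{fact_AlCen}(\ref{item_GenVonNeumannThm}) to the pair $(K,H_1)$, but the hypothesis that one normalizes the other may fail. I see two ways to handle this.

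\begin{itemize}
\item If $H\trianglelefteq G$, then $H_1$ is normalized by $K$.
\item In general, replace $K$ by $K_1=K\cap N_G(H_1)$. Since $H_1\sim H$, the group $N_G(H_1)$ contains $C_G(H)$-type elements; more carefully, for $k\in K$ the subgroup $C_{H_1}(k)$ has finite index in $H_1$, so $H_1$ and $H_1^k$ are commensurable. Then I would instead take the finite-index intersection of all $K$-conjugates of $H_1$. By uniform commensurability, $H_1^k\cap H_1$ has index bounded independently of $k$, so this intersection is definable and of finite index, via the Schlichting / Bergman–Lenstra uniform-commensurability theorem.
\end{itemize}

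In the resulting setting we have $K\le \wt C_G(H_1)$ with uniform commensurability and $H_1\le\wt C_G(K)$, and $K$ normalizes $H_1$. Fact~\ref{fact_AlCen}(\ref{item_GenVonNeumannThm}) then gives that $[H_1,K]$ is finite, and I take $H_0=H_1$.

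In the normal case, $H_0=H\cap\wt C_G(K)$ is an intersection of normal subgroups, hence normal, which gives the last assertion. The main obstacle is arranging the normalization hypothesis in the non-normal case. I expect a Schlichting-type result to be needed to produce a $K$-invariant definable finite-index subgroup of $H$, and I would try that first.
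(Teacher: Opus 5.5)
\textbf{Part (1)} is correct and is essentially the paper's argument; the paper phrases your fibre count as ``$[B,g]/F$ finite and $F$ finite imply $[B,g]$ finite''. \textbf{Part (2)} has a genuine gap when $H$ is not normal.

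Your $H_1=H\cap\wt C_G(K)$ is exactly the paper's $H_0=\wt C_H(\wt C_G(H))$, and your argument is complete when $H\trianglelefteq G$. For general $H$ you only sketch a workaround, and it does not work as stated:
\begin{itemize}
\item Replacing $K$ by $K\cap N_G(H_1)$ changes the conclusion: you would only get $[H_1,K\cap N_G(H_1)]$ finite, not $[H_1,K]$.
\item Uniform commensurability of $\{H_1^k : k\in K\}$ does not make $\bigcap_{k\in K}H_1^k$ of finite index. For example, the coordinate hyperplanes $\{v : v_j=0\}$ of $\bigoplus_{\mathbb N}\mathbb F_2$ are uniformly commensurable with trivial intersection.
\item Schlichting's theorem only produces some $K$-invariant subgroup commensurable with $H_1$, namely a finite extension of a finite intersection of conjugates. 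It need not lie in $H$, since $K$ need not normalize $H$, nor in $\wt C_G(K)$. So it does not give a finite-index subgroup $H_0\le H$ satisfying the hypotheses of Fact~\ref{fact_AlCen}(\ref{item_GenVonNeumannThm}).
\end{itemize}

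The missing observation is that no normality or Schlichting argument is needed if you apply Fact~\ref{fact_AlCen}(\ref{item_GenVonNeumannThm}) with the roles reversed. The group $H$ always normalizes $K=\wt C_G(H)$: for $h\in H$ and $k\in K$ one has $C_H(k^h)=C_H(k)^h$, which has the same index in $H^h=H$. Hence $H_1\le H$ normalizes $K$. Now take $H_1$ as the normalizing group in the Fact and check the remaining hypotheses:
\begin{itemize}
\item $H_1\le \wt C_G(K)$ by definition;
\item $K\le \wt C_G(H_1)$, because $H_1\le H$;
\item commensurability is uniform in $\wt C_K(H_1)$, because $G$ is an \Mgp;
\item $H_1$ and $K$ are both definable.
\end{itemize}
So $[K,H_1]$ is finite for arbitrary definable $H$, with $H_0=H_1$. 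This is precisely the paper's proof, and the normality clause follows as you said.
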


\begin{proof} Let $g \in \wt C_G (B/F)$, then $[B,g]/F$ is finite. As $F$ is finite, so is $ [B,g]$. Thus $C_B(g)$ has finite index in $B$. As $B$ has finite index in $A$, so does $C_B(g)$ and in particular $C_A(g)$. Thus $g \in \wt C_G(A)$. As the other inclusion is obvious, this proves (1).

For (2), note first that we trivially have $\wt C_{G} (H) \leq \wt  C_{G} (H)$. By Fact \ref{fact_AlCen}(\ref{item_Sym}), we obtain $H \ls \wt C_{G} (\wt C_{G} (H))$. Let $  H_0: = \wt C_{H} (\wt C_{G} (H)) $, which is definable as $G$ is an \Mgp. Clearly, if $H$ is a normal subgroup of $G$ then so is $H_0$. Moreover, as $H_0 \sim H$ and by the choice of $H_0$, we have $H_0 \leq \wt  C_G (\wt C_G (H))$ and $\wt C_{G} (H) \leq \wt  C_{G} (H_0)$. Moreover $H_0$ normalizes $\wt C_{G} (H)$ and commensurability is uniform in $\wt  C_{\wt C_{G} (H)} (H_0)$ by definability of the almost centralizer. Hence Fact \ref{fact_AlCen}(\ref{item_GenVonNeumannThm}) yields that $ [\wt  C_{G} (H), H_0]$ is finite.\end{proof}

\begin{lemma}\label{lemma:Tent--Mcpher}
Let $G$ be a group and $1= R_0 < R_1 < \dots < R_m =R $ be a chain of normal subgroups of $G$ such that each $R_{l+1}/R_{l}$ is either abelian or finite, and $G/ C_G(R)$ is unbounded.  
Then there exists $l < m$ such that $$C_G(R_l) \Big / C_{C_G(R_l)}(R_{l+1}/R_l)$$ is unbounded and $R_{l+1}/R_{l}$ is infinite.
\end{lemma}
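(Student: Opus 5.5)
The plan is a downward induction along the chain, using the fact that the class of bounded groups is closed under subgroups, quotients and extensions. Two observations drive it. First, if $N \trianglelefteq X$ and both $N$ and $X/N$ are bounded, then $X$ is bounded. Indeed, for a solvable normal subgroup $S$ of $X$, the image $SN/N$ is a solvable normal subgroup of $X/N$, so its derived length is at most $d_1$. Hence $S^{(d_1)} \leqslant S\cap N$, which is a solvable normal subgroup of $N$ and so has derived length at most $d_2$. Thus $S$ has derived length at most $d_1+d_2$. Second, subgroups of bounded groups need not be bounded in general, since a normal subgroup of a subgroup need not be normal in the whole group. I will therefore only pass to normal subgroups and quotients, where boundedness behaves well. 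More precisely, if $X$ is bounded and $N\trianglelefteq X$, then every solvable normal subgroup of $N$ is characteristic-free but its normal closure may be large, so I should be careful. I will avoid this issue by working with the groups $G_l := C_G(R_l)$, which are normal in $G$.

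For each $l$, the group $G_l = C_G(R_l)$ is normal in $G$, and $G_{l+1} = C_{G_l}(R_{l+1}) \leqslant C_{G_l}(R_{l+1}/R_l) =: K_l$. Consider the series of normal subgroups of $G$
\[ G = G_0 \geqslant G_1 \geqslant \dots \geqslant G_m = C_G(R). \]
Since $G/C_G(R)$ is unbounded, some factor $G_l/G_{l+1}$ should be unbounded, by iterating the extension observation. Here one has to make sure that a solvable normal subgroup of $G_l/G_{l+1}$ (or of a quotient) comes from solvable normal subgroups of $G/G_{m}$. I would phrase the argument directly: assume each $G_l/K_l$ is bounded with bound $d_l$, and also handle $K_l/G_{l+1}$. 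Let $S/G_m$ be solvable and normal in $G/G_m$. Then $S\cap G_l$ is normal in $G$, and its image in $G_l/K_l$ is normal there, since $S\cap G_l \trianglelefteq G_l$. This bounds the derived length step by step, provided each intermediate factor is bounded.

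The key step is to bound the factor $K_l/G_{l+1}$. An element of $K_l$ centralizes $R_l$ and $R_{l+1}/R_l$. Such automorphisms of $R_{l+1}$ correspond to maps $R_{l+1}/R_l \to Z(R_l)$ via $x\mapsto [x,g]$, and these are homomorphisms because $g$ centralizes $R_l$. The map $g\mapsto [\cdot ,g]$ is a group homomorphism into this abelian group. Hence $K_l/G_{l+1}$ is abelian, and in particular bounded of derived length $1$.

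When $R_{l+1}/R_l$ is finite, $G_l/K_l$ embeds into $\mathrm{Aut}(R_{l+1}/R_l)$, so it is finite and hence bounded. If every $l$ with $G_l/K_l$ unbounded had $R_{l+1}/R_l$ finite, then all factors would be bounded with explicit bounds, and by the extension observation $G/C_G(R)$ would be bounded, a contradiction. Therefore some $l$ has $G_l/K_l = C_G(R_l)/C_{C_G(R_l)}(R_{l+1}/R_l)$ unbounded, and this $l$ must have $R_{l+1}/R_l$ infinite.

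The main obstacle is the bookkeeping for normality. One must check that solvable normal subgroups of $G/C_G(R)$ intersect each factor $G_l/K_l$ and $K_l/G_{l+1}$ in normal subgroups of that factor. This holds because all of $G_l$, $K_l$ and $G_{l+1}$ are normal in $G$. The derived length then adds up over the at most $2m$ factors.
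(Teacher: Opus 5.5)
Your proof is correct and is essentially the paper's argument: both pass through $C_G(R_l)\geqslant K_l=C_{C_G(R_l)}(R_{l+1}/R_l)\geqslant C_G(R_{l+1})$, use closure of boundedness under extensions to force some $C_G(R_l)/K_l$ to be unbounded, and exclude a finite $R_{l+1}/R_l$ via the faithful action (the paper takes the maximal $l$ with $G/C_G(R_l)$ bounded instead of summing derived lengths over all factors). One small correction: for $g\in K_l$ the map $x\mapsto[x,g]$ is in general only a crossed homomorphism $R_{l+1}/R_l\to Z(R_l)$, not a homomorphism (take $A_3<S_3$ and $g\in A_3$), but $g\mapsto[\cdot,g]$ is still a homomorphism with kernel $C_G(R_{l+1})$ into the abelian group of all maps $R_{l+1}\to Z(R_l)$ under pointwise product, so $K_l/C_G(R_{l+1})$ is abelian as you say---this is what makes the paper's brief ``embeds in a direct power of $R_l$'' step rigorous, and it shows your argument does not even need the abelian-or-finite hypothesis.
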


\begin{proof}We argue as in \cite[Proof of Lemma 4.2]{Tent-Macpherson2007}: Let $l$ be maximal so that $G/C_G(R_l)$ is bounded. Then $C_G(R_l)\Big/ C_G(R_{l+1})$ is unbounded and $$ C_G(R_{l+1}) \leq C_{C_G(R_l)} (R_{l+1}/R_l) \leq C_G(R_l).$$ Since $  C_{C_G(R_l)} (R_{l+1}/R_l)/  C_G(R_{l+1})$ consists of automorphisms of $R_{l+1}$ which fix $R_l$ and $R_{l+1}/R_l$ pointwise, it embeds in a direct power of the bounded group $R_l$, so it is bounded. It follows that the group $$C:=C_G(R_l) \Big / C_{C_G(R_l)}(R_{l+1}/R_l) $$ is unbounded. Note also that $C$ acts faithfully on $R_{l+1}/R_l$.  Therefore, if $R_{l+1}/R_l$ was finite then $C$ would be finite too, which is not the case as it is unbounded. 
\end{proof}

\begin{propo}\label{lem_GroupActionAH}
Let $G$ be an unbounded definably hereditarily \Mgp. Then there are definable sections $A$ and $H$ of $G$ and a definable action of $H$ on $A$ such that
\begin{itemize}
    \item $A$ is abelian;
    \item the action of $H$ on $A$ is faithful and by conjugation;
    \item $H$ is an unbounded  definably hereditarily  \Mgp;
    \item $\wt C_G(A) = C_G(A)$.
\end{itemize}
\end{propo}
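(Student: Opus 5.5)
We will build a definable chain inside a definable solvable normal subgroup of large derived length, then apply Lemma~\ref{lemma:Tent--Mcpher} to reduce the unboundedness to a faithful action on a single abelian factor. Finally we use Lemma~\ref{lem_AlCenProps} to adjust that factor so that almost centralizing it coincides with centralizing it.

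\textbf{Step 1: a definable solvable normal subgroup.} Since $G$ is unbounded, we first find a definable solvable normal subgroup $R$ for which $G/C_G(R)$ is still unbounded. We would try this first by a DCC argument. The \Mcc\ makes the centralizers $C_G(N)$ of normal solvable subgroups $N$ stabilize up to finite index. Choose $N$ normal solvable with $C_G(N)$ minimal up to commensurability, and replace $N$ by a definable envelope, for instance an iterated centralizer or a normal closure of a suitable abelian subgroup as in \cite{Tent-Macpherson2007}. Then $G/C_G(R)$ contains the images of normal solvable subgroups of unbounded derived length, because $C_G(R)$ can absorb only a bounded part of them up to finite index.

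\textbf{Step 2: choosing the abelian factor.} We take a definable normal series $1=R_0<\dots<R_m=R$ of $R$, using for example the derived series, so that each factor is abelian. Lemma~\ref{lemma:Tent--Mcpher} then gives an index $l$ such that $R_{l+1}/R_l$ is infinite and $H:=C_G(R_l)/C_{C_G(R_l)}(R_{l+1}/R_l)$ is unbounded. We set $A=R_{l+1}/R_l$. This is abelian, it is definable, and it carries the conjugation action of the definable group $C_G(R_l)$. The quotient $H$ acts faithfully by construction. It is a definable section of $G$, so it is a definably hereditarily \Mgp.

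\textbf{Step 3: the almost centralizer condition.} It remains to arrange $\wt C(A)=C(A)$, interpreting the centralizer inside the acting group. Here Lemma~\ref{lem_AlCenProps}(2) applies in the definably hereditarily \Mgp\ setting, where we work in the semidirect product $A\rtimes C_G(R_l)$ or in the relevant section. It yields a definable finite-index subgroup $A_0\le A$, normal under the action, with $[A_0,\wt C(A)]$ finite. Lemma~\ref{lem_AlCenProps}(1) lets us pass to $A_0$ and quotient by the finite normal subgroup $F=[A_0,\wt C(A)]$ without changing the almost centralizer. After this, $\wt C(A_0/F)=\wt C(A)$ centralizes $A_0/F$. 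So the new $A':=A_0/F$ satisfies $\wt C(A')=C(A')$.

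We then redefine $H$ as the acting group modulo $C(A')$. Note that $C(A')\le\wt C(A)$, and $\wt C(A)$ modulo $C(A)$ is bounded, since it acts finitely on the commutators. Hence unboundedness survives. The argument is analogous to the embedding into direct powers used in the proof of Lemma~\ref{lemma:Tent--Mcpher}.

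The main obstacle is Step 1: producing a definable $R$ with $G/C_G(R)$ unbounded, because the radical itself need not be definable. We would try to exploit the uniform DCC on centralizers to find finitely many elements whose normal closures are solvable and already witness unboundedness, and then take definable solvable envelopes.
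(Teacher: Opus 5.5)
Your overall architecture (a definable solvable normal subgroup, a definable normal series, Lemma~\ref{lemma:Tent--Mcpher}, then a repair via Lemma~\ref{lem_AlCenProps}) matches the paper's. However, the two steps that carry the real content are missing.

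\textbf{Step~1.} You leave this step open, and the mechanism you suggest is not valid. The \Mcc\ controls centralizers of finitely many elements. It yields no chain condition on centralizers of infinite normal subgroups. For instance, in an infinite extraspecial group of exponent $p$ every element has at most $p$ conjugates, so every section satisfies the \Mcc. Now suppose $G/Z(G)$ has symplectic basis $\{e_{j,k},f_{j,k}: j,k\in\N\}$ and let $N_i$ be the preimage of the span of $\{e_{j,k}: j\le i\}$. Then $N_1<N_2<\cdots$ are normal, yet $C_G(N_i)$ drops by infinite index at every step.

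The paper resolves Step~1 with three ingredients you do not have:
\begin{enumerate}[(i)]
\item Hempel's envelope theorem \cite[Theorem 3.2(3)]{Hempel2020}. It puts each solvable normal $T_i$ of derived length $n_i$ inside a \emph{definable} solvable normal $R_i$ of derived length at most $3n_i$, and these can be arranged increasing.
\item Almost centralizers instead of centralizers. These are definable in an \Mgp, so $\wt C_G(\bigcup_i R_i)=\bigcap_i \wt C_G(R_i)=\wt C_G(R_n)$ for some $n$ by compactness.
\item The inclusion $R_j\cap \wt C_G(R_n)\le \wt Z(R_j)$ for $j\ge n$. Almost centres are finite-by-abelian by Fact~\ref{fact_AlCen}(2), so $G/\wt C_G(R_n)$ is unbounded, whence so is $G/C_G(R_n)$.
\end{enumerate}

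\textbf{Step~2.} In a definably hereditarily \Mgp\ the derived series of a definable solvable group is not known to be definable. So your $A$ and $H$ need not be definable sections. Moreover, Lemma~\ref{lem_AlCenProps}(2), which requires a definable subgroup of an \Mgp, cannot be applied to $A$. What is available is \cite[Proposition 3.8]{Hempel2020}: a definable series $1=S_0\le\cdots\le S_{m_n}=R_n$ of $G$-normal subgroups with finite-by-abelian factors.

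Since Lemma~\ref{lemma:Tent--Mcpher} needs abelian or finite factors, the paper performs your Step~3 \emph{before} invoking it. Working modulo $S_l$, it takes $S_{l+1}^-$ of finite index from Lemma~\ref{lem_AlCenProps}(2) and the finite $G$-normal subgroup $S_l^+=[\wt C_G(S_{l+1}),S_{l+1}^-]$. Then $S_{l+1}^-/S_l^+$ is abelian because $S_{l+1}\le \wt C_G(S_{l+1})$, and Lemma~\ref{lem_AlCenProps}(1) gives $\wt C_G=C_G$ on it. Applying Lemma~\ref{lemma:Tent--Mcpher} to the refined series yields an infinite, hence abelian, factor that already has the required property. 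Unboundedness therefore never has to be re-checked.

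\textbf{Step~3.} Your post hoc repair can be made to work, because solvable subgroups of $\wt C_G(A)/C_G(A)$ have bounded derived length. This follows from a short normal series whose factors are abelian or embed in $\mathrm{Aut}(F)$ or $\mathrm{Aut}(A/A_0)$. But `it acts finitely on the commutators' does not prove this. Also, the statement asks for $\wt C_G(A)=C_G(A)$. So the repair should be done with all of $G$ acting, and inside the definable section $G/R_l$ rather than a semidirect product that is not known to be an \Mgp.
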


\begin{proof}Since $G$ is unbounded there is an infinite chain of solvable normal subgroups $T_i \trianglelefteq G$ of derived length $n_i$ with $n_i \overset{i \to \infty}{\longrightarrow} \infty$. By \cite[Theorem 3.2(3)]{Hempel2020} there are definable solvable normal subgroups $R_i \trianglelefteq G$ of derived length $m_i$ with $n_i \leq m_i \leq 3n_i$ (thus $m_i  \overset{i \to \infty}{\longrightarrow} \infty$) such that, after taking intersections if necessary, $R_i \leq R_{i+1}$. Set $R = \bigcup R_i$ and note that $R \trianglelefteq G$. By properties of almost centralizers (if necessary, see \cite[Propoerties 2.11]{Hempel2020}) we have that
    \[
     \wt C_G(R) = \bigcap \wt C_G(R_i),
    \]
and $\wt C_G(R_i) \geq \wt C_G(R_{i+1})$. Hence, as the almost centralizer is definable in any \Mgp , we obtain by compactness that $ \wt C_G(R) = \wt C_G(R_n)$ for some $n \in \N$. Since the commutator subgroup $\wt Z(R_n)'$ is finite (Fact \ref{fact_AlCen}(\ref{item_GenVonNeumannThm})), $\wt Z(R_n)$ is finite--by--abelian. Thus $R_n/\wt Z(R_n) $ is non--solvable and hence $G/ \wt C_G(R_n)$ is unbounded. Set $F:=\wt Z(R_n)'$. Now we work in $\overline G = G/F$, which is still a definably hereditarily \Mgp\, and consider $\overline R_n := R_n/F \leq \overline G$. We have $\wt Z (\overline R_n) = \wt Z(R_n) /F$: For any $r \in R_n$,  \[
    [R_n : C_{R_n}(r)] \leq [\overline R_n : C_{\overline R_n} (rF)] \cdot |F| \qtext{ and } [\overline R_n : C_{\overline R_n} (rF)] \leq [R_n : C_{R_n}(r)].
    \] Thus $[R_n : C_{R_n}(r)]$ is finite if and only if $[\overline R_n : C_{\overline R_n} (rF)]$ is finite. Hence $rF \in  \wt Z(\overline R_n)$ if and only if $ r \in \wt Z(R_n)$. So $\wt Z (\overline R_n) = \wt Z(R_n) /F$ and hence $\wt{C}_G(R_n)=C_G(\overline{R}_n)$. So, if we replace $G$ by $\overline{G}$ and $R_n$ by $\overline{R}_n$, we may assume that $G/C_G(R_n)$ is unbounded.

Now, by \cite[Proposition 3.8]{Hempel2020} the solvable definably hereditarily \Mgp\ $R_n$ contains a chain of definable normal (in $G$) subgroups $$1=S_0 \leq S_1 \leq \cdots S_{m_n}=R_n,$$ where $S_{l+1}/S_l$ is finite--by--abelian for each $l\in \{0, \ldots m_i\}$.

Let $l  \in \{ 0, \dots , m_{n}-1\}$. In what follows, we show that there are definable normal (in $G$) subgroups $ S_{l}\leq S_{l}^+\leq S_{l+1}^- \leq S_{l+1}$ such that $ S_{l+1}/S_{l+1}^-$ and $S_{l}^+/S_l$ are finite, $S_{l+1}^-/S_{l}^+$ is abelian, and $\wt C_{G} (S_{l+1}^-/S_{l}^+)= C_{G} ( S_{l+1}^-/S_{l}^+)$. To ease notation, we work modulo $S_l$ and thus may assume that $S_l= 1$ and $S_{l+1}$ is finite-by-abelian. In particular $S_{l+1} 
\leq \wt  C_{G} (S_{l+1})$.

By Lemma \ref{lem_AlCenProps}(\ref{item_AlCenProps_FiniteCom}) there is a definable normal (in $G$) subgroup $S_{l+1}^-$ of $S_{l+1}$ of finite index such that $S_l^+ := [\wt  C_{G} (S_{l+1}), S_{l+1}^-]$ is finite (thus definable). Clearly $S_l^+$ is also normal in $G$. Set $A_{l+1}:= S_{l+1}^-/ S_l^+$, which is clearly abelian. By Lemma \ref{lem_AlCenProps}(\ref{item_AlCenProps_ModuloFinite}) we have that $\wt C_{G} (A_{l+1})= \wt C_{G} (S_{l+1})$. Moreover, it is easy to see that $\wt C_{G} (A_{l+1}) =  C_{G} (A_{l+1})$: The inclusion from right to left is obvious. If $g \in  \wt C_{G} (A_{l+1})= \wt C_{G} (S_{l+1})$ then $[g, S_{l+1}^-] \subset  [\wt  C_{G} (S_{l+1}), S_{l+1}^-]= S_l^+$. Thus $[g, S_{l+1}^-]/S_l^+ = \bar 1 $ and  $g \in  C_{G} (A_{l+1})$. 

 We obtained a new descending series
    \[ S_0 \leq S_0^+ \leq S_1^- \leq S_1 \leq S_1^+ \leq S_2^- \leq S_2 \leq \dots \leq S_{m_n -1} \leq S_{m_n -1}^+ \leq S_{m_n}^- \leq S_{m_n},
    \]
where  $S_0=1$, $S_{m_n} = R_n$, $ S_{l+1}/S_{l+1}^-$ and $S_{l}^+/S_l$ are finite, $S_{l+1}^-/S_{l}^+$ is abelian, and $\wt C_{G} (S_{l+1}^-/S_{l}^+)= C_{G} ( S_{l+1}^-/S_{l}^+)$. Recall that we also have that $G/C_G(R_n)$ is unbounded. Thus by Lemma \ref{lemma:Tent--Mcpher}, there is a member $X \in \{S_j,  S_j^+, S_{j+1}^-: 0 \leq j < m_n \}$ in this series so that, setting $X^+$ to be the next member, the definable section $$H:=C_G(X) \Big / C_{C_G(X)}(X^+/X)$$ of $G$ is unbounded. Now $H$ has a definable faithful action by conjugation on the definable section $A:=X^+/X$ of $G$. Also, again by Lemma~\ref{lemma:Tent--Mcpher} (or since the infinite group $H$ acts faithfully on $A$), $A$ is infinite. Thus, there is $l \in \{0, \dots , m_{n}-1\}$ such that $A := S_{l+1}^-/S_l^+$, so we also get that $\wt C_G(A) = C_G(A)$ and that $A$ is abelian.
\end{proof}

\subsection{Almost centralizers in ultraproducts with irreducible action on abelian groups}In \cite{Tent-Macpherson2007} one of the crucial steps of the proof of the main theorem is to show that, in the context of definably hereditarily $\M_c$--groups, if $G$ is an ultraproduct $\prod_{i\in I}G_i/\mathcal{U}$ of finite groups $G_i$, where $\mathcal{U}$--many of the groups $G_i$ acts faithfully by conjugation on an abelian group $V_i$, then any definable abelian normal subgroup $A$ of $G$ is pseudo--(finite and $n$--generated). This is useful since results from finite group theory then allow one to show that, if in addition $\mathcal{U}$--many of the finite groups $G_i$ are solvable, then $G/C_G(A)$ is solvable. All this can be proven in definably hereditarily $\wt{\mathfrak{M}}_c$--context as well (Proposition~\ref{propo:ultraproduct}). Our key observation is that irreducibility of the action in the finite level allows us to show that certain almost centralizers are equal to centralizers in the infinite level. The main reason why we want to present such a tool is the hope that it can be useful in the studies of pseudo--finite permutation groups, which is a deep and recently very active topic in model theory.

We first need to introduce one more background result.

A family $\mathcal{H}$ of subgroups of $G$ is \emph{uniformly commensurable} if there is $n\in \mathbb{N}$ so that $|H_1 : H_1\cap H_2| < n$ for all $H_1,H_2 \in \mathcal{H}$. The following result is due to Schlichting but the formulation we give here can be found in \cite[Theorem 4.2.4]{Wagner2000}.

\begin{theorem}[Schlichting's Theorem]\label{th:commensurable}Let $G$ be a group and $\mathcal{H}$ be a uniformly commensurable family of subgroups of $G$. Then there is a subgroup $N$ of $G$ which is uniformly commensurable with all members of $\mathcal{H}$ and is invariant under all automorphisms of $G$ which fix $\mathcal{H}$ setwise. In fact, $N$ is a finite extension of a finite intersection of elements of $\mathcal{H}$. In particular, if $\mathcal{H}$ consists of definable subgroups, then $N$ is definable. \end{theorem}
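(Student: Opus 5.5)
The plan is to follow the Bergman--Lenstra route to Schlichting's theorem, which rests on B.\,H.\ Neumann's covering lemma: if a group is covered by finitely many cosets of subgroups, then one of those subgroups has index bounded by the number of cosets. Fix $n$ witnessing uniform commensurability of $\mathcal{H}$, and let $\mathcal{D}$ be the family of all finite intersections of members of $\mathcal{H}$. Every automorphism of $G$ that fixes $\mathcal{H}$ setwise also fixes $\mathcal{D}$ setwise, so it suffices to build $N$ canonically from $\mathcal{D}$.

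\textbf{Step 1.} I would first prove a uniform bound: there is $k=k(n)$ such that $|D:D\cap H|\le k$ for every $D\in\mathcal{D}$ and $H\in\mathcal{H}$. The bound $|H_1:H_1\cap H_2|<n$ is given; what needs an argument is an intersection $D$ of many members against one further $H$. The idea is to count, for $D=H_1\cap\dots\cap H_r$, the cosets of $D\cap H$ inside $D$, and to use Neumann's lemma to show that a large index would produce a covering of some $H_j$ by fewer than the permitted number of cosets of $H_j\cap H$. This contradicts $|H_j:H_j\cap H|<n$ once $k$ is chosen suitably in terms of $n$.

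\textbf{Step 2.} Next I would take the class $\mathcal{D}_0\subseteq\mathcal{D}$ of intersections that are \emph{maximally small} in a bounded sense. Since $|H:H\cap D|$ need not be bounded as $D$ ranges over $\mathcal{D}$, I would pick $D_0\in\mathcal{D}$ maximising a suitable bounded invariant, for instance the number $\max_{H}|D:D\cap H|$, which is $\le k$ by Step~1. Then $\mathcal{D}_0=\{D\in\mathcal{D}: D \text{ attains this maximum}\}$ is invariant under all automorphisms fixing $\mathcal{H}$. Using maximality, I would show that any two members of $\mathcal{D}_0$ are commensurable with indices bounded by $k^2$.

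\textbf{Step 3.} Define $N$ to be the subgroup generated by $\bigcup\mathcal{D}_0$. It is canonical, hence invariant under the relevant automorphisms. A further application of Neumann's lemma to the (boundedly many) cosets of $D_0$ met by the members of $\mathcal{D}_0$ shows that $|N:D_0|$ is finite, bounded in terms of $k$. Thus $N$ is a finite extension of the finite intersection $D_0$, and since $D_0$ is uniformly commensurable with all $H\in\mathcal{H}$, so is $N$. For definability: if each $H\in\mathcal{H}$ is definable, then $D_0$ is definable and $N$ is a finite union of cosets of $D_0$, hence definable.

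I expect Step~1, together with the finiteness of $|N:D_0|$ in Step~3, to be the main obstacle. Both are purely combinatorial coset-counting arguments, and they require a careful use of Neumann's lemma to obtain bounds depending only on $n$. I would first try the version in Wagner's book that works with the partial order on $\mathcal{D}$ given by almost-inclusion with a fixed bound.
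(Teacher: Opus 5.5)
The paper gives no proof of this statement (it is quoted from Wagner's book), so your sketch has to stand on its own. It has a genuine gap in Step~3, which is where the whole content of the theorem lies. Knowing that each member of $\mathcal{D}_0$ meets boundedly many cosets of $D_0$ gives no bound on how many cosets all of them meet together. Neumann's lemma cannot supply that bound, because it needs a finite covering as input.

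With the invariant you propose, the conclusion is in fact false. Let $G$ be an infinite-dimensional $\mathbb{F}_2$-vector space and $\mathcal{H}$ the family of its $1$-dimensional subspaces, so all indices are at most $2$. Then $\mathcal{D}=\mathcal{H}\cup\{0\}$, and $\max_H|D:D\cap H|$ is $2$ on each $\langle v\rangle$ and $1$ on $\{0\}$. Maximising it gives $\mathcal{D}_0=\mathcal{H}$, whose members are pairwise commensurable but generate $N=G$, which is commensurable with no member of $\mathcal{H}$; the right answer here is $N=\{0\}$. The underlying problem is that this invariant grows with $D$, so maximising it selects large intersections, not the ``maximally small'' ones you say you want.

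Two side remarks. Step~1 is a one-liner, not an obstacle: if $D\le H_1$ then $D/(D\cap H)$ injects into $H_1/(H_1\cap H)$, so $|D:D\cap H|<n$. The $k^2$ bound claimed in Step~2 is false in general: for $\mathcal{H}$ the hyperplanes of $G$, every finite intersection has invariant $2$, so $\mathcal{D}_0$ contains subspaces of arbitrarily large codimension. That bound is also not needed.

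The fix is to \emph{minimise} $\nu(D)=\max_{H\in\mathcal{H}}|D:D\cap H|$ over $\mathcal{D}$. Let $k$ be its least value and $\mathcal{D}_0=\{D\in\mathcal{D}:\nu(D)=k\}$. This family is still invariant, and since $\nu$ is monotone, any member of $\mathcal{D}$ contained in a member of $\mathcal{D}_0$ lies in $\mathcal{D}_0$.

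The key observation is this: if $E\le D$ both lie in $\mathcal{D}_0$ and $K\in\mathcal{H}$ satisfies $|E:E\cap K|=k$, then the injection $E/(E\cap K)\to D/(D\cap K)$ is onto, that is, $D=E(D\cap K)$.

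Now fix $D_0=H_1\cap\dots\cap H_r\in\mathcal{D}_0$, put $N=\langle\bigcup\mathcal{D}_0\rangle$, and take $g_1,\dots,g_m\in N$ in distinct right cosets of $D_0$. Write each $g_i$ as a product of elements of members of $\mathcal{D}_0$. Let $E$ be the intersection of $D_0$ with the finitely many members used, and choose $K\in\mathcal{H}$ with $|E:E\cap K|=k$. Pushing $E$-factors to the left gives $g_i\in EK\subseteq D_0K$. Hence $D_0g_i=D_0\kappa_i$ with $\kappa_i\in K$, and so $m\le|K:K\cap D_0|\le(n-1)^r$.

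Thus $|N:D_0|\le(n-1)^r$, and the rest of your Step~3 (invariance, uniform commensurability with $\mathcal{H}$, definability as a finite union of cosets of $D_0$) goes through. No appeal to Neumann's lemma is needed.
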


\begin{propo}\label{propo:ultraproduct}Let $G$ be a non--principal ultraproduct of finite groups $G_i$. Suppose that $G$ acts by conjugation on a group $V$ which is a non--principal ultraproduct of finite abelian groups $V_i$. Suppose further that the groups $G$, $V$ and the action $G \curvearrowright V$ are all interpretable in some definably hereditarily \Mgp . \begin{enumerate}[(a)]
    \item Assume that $K=\prod_{i\in I}K_i/\mathcal{U}$ is a definable subgroup of $G$, and that $W=\prod_{i\in I}W_i /\mathcal{U}$ is an infinite $K$--invariant subgroup of $V$ so that, for $\mathcal{U}$--many $i$, $W_i$ is $K_i$--irreducible. Then $\wt{C}_{K}(W)=C_{K}(W)$.
    \item Suppose further that the action $G \curvearrowright V$ is faithful. If $M=\prod_{i\in I}M_i/\mathcal{U}$ is a definable normal abelian subgroup of $G$ then $M$ is pseudo--(finite and $n$--generated), that is, there is $n\in\mathbb{N}$ so that for $\mathcal{U}$--many $i$ the group $M_i$ is $n$--generated.
\end{enumerate}

\end{propo}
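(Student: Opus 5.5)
For (a) I would show that every $k\in\wt C_K(W)$ acts trivially on $W$. The plan is to find a $K$--invariant subgroup of $W$ of bounded index that $k$ centralizes, and then let irreducibility of the $W_i$ force it to be all of $W$.

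First, all the relevant groups live in a definably hereditarily \Mgp, so commensurability is uniform in almost centralizers. Hence there is $m$ with $|W:C_W(k)|<m$. Since $\wt C_K(W)$ is normal in $K$, the same bound holds for every conjugate: $|W:C_W(k^g)|<m$ for all $g\in K$.

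So the family $\mathcal H=\{C_W(k^g):g\in K\}$ is a uniformly commensurable family of definable subgroups of $W$. Conjugation by elements of $K$ permutes $\mathcal H$. By Schlichting's Theorem~\ref{th:commensurable} there is a definable $K$--invariant subgroup $N\le W$, commensurable with $W$. Moreover $N$ is a finite extension of some $D=C_W(k^{g_1},\dots,k^{g_r})$.

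I would rather work with $D^\ast:=\bigcap_{g\in K}C_W(k^g)=C_W(k^K)$, which is $K$--invariant and centralized by $k$. The step I expect to be the main obstacle is showing that $D^\ast$ has finite index in $W$; Schlichting alone only gives a finite extension of a finite intersection. To get it, I would combine the \Mcc\ (the chain condition on $C_W(k^{g_1},\dots,k^{g_j})$ as $j$ grows, applied in the ambient hereditarily \Mgp) with the $K$--invariance and commensurability of $N$. The aim is to see that adding further conjugates changes $D$ only by bounded finite index, uniformly, so that $D^\ast\sim N\sim W$.

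Once $D^\ast$ is definable of bounded index, Łoś gives $D^\ast=\prod D^\ast_i/\mathcal U$ with $D^\ast_i$ a $K_i$--invariant subgroup of $W_i$ of index $<c$ for $\mathcal U$--many $i$. Since $W$ is infinite, $|W_i|$ is unbounded, so $D_i^\ast\neq 0$. Irreducibility then gives $D_i^\ast=W_i$, so $D^\ast=W$ and $k\in C_K(W)$.

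A dual route I would try if this stalls is to pass to the finite level directly. Since $W_i$ is abelian, $|[W_i,k_i]|=|W_i:C_{W_i}(k_i)|<m$. The $K_i$--invariant subgroup $[W_i,\langle k_i^{K_i}\rangle]$ is then either $0$ or $W_i$, and one would bound the number of conjugates needed to generate it via the chain condition.

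For (b) I would follow \cite{Tent-Macpherson2007}, using Clifford's theorem to decompose each $V_i$ as an $M_i$--module. First I would choose finitely many uniformly definable $M$--invariant subgroups $W^{(1)},W^{(2)},\dots$ of $V$ whose finite components are $M_i$--irreducible. An abelian group acting faithfully and irreducibly on a finite module is cyclic by Schur's lemma, so each $M_i/C_{M_i}(W^{(j)}_i)$ is cyclic.

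By part (a) applied with $K=M$, each $C_M(W^{(j)})$ equals $\wt C_M(W^{(j)})$. Hence in the descending chain $C_M(W^{(1)})\ge C_M(W^{(1)},W^{(2)})\ge\cdots$ each proper step has infinite index; the only exceptions are steps on finite $W^{(j)}$, which contribute boundedly. The \Mcc, transferred to finite levels by Łoś, bounds the length of such a chain by some $n$. Since the action on $V$ is faithful, choosing the $W^{(j)}$ so that the chain reaches $C_M(V)=1$ embeds $M_i$ into a product of at most $n$ cyclic groups. Thus $M_i$ is $n$--generated for $\mathcal U$--many $i$.
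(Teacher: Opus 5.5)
The gap is in (a), at exactly the step you flag. Reducing (a) to showing that $D^\ast=C_W(k^K)$ has finite index in $W$ is fine. Under the hypotheses, however, this is equivalent to the conclusion $k\in C_K(W)$, and the tools you propose (the chain condition on $C_W(k^{g_1},\dots,k^{g_j})$, Schlichting, and the $K$-invariance and commensurability of $N$) cannot establish it: none of them involves irreducibility, and without irreducibility the claim is false. Every step of that chain has index $<m$, and the \Mcc\ says nothing about arbitrarily long chains whose steps have bounded index. So a bounded change per conjugate need not add up to $D^\ast\sim W$.

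For instance, let $F=\prod_i\mathbb F_{p^{n_i}}/\mathcal U$ with $n_i\to\infty$, and let $\chi\colon F\to\mathbb F_p$ be the ultraproduct of the absolute traces, a definable homomorphism with kernel $\wp(F)=\{y^p-y: y\in F\}$. Let $K=F^+\rtimes F^\ast$ act on $W=F\times\mathbb F_p$ by $\tau_a(x,s)=(x,s+\chi(ax))$ and $\mu_c(x,s)=(cx,s)$, so that $\mu_c\tau_a\mu_c^{-1}=\tau_{ac^{-1}}$. Then $W\rtimes K$ is a non-principal ultraproduct of finite groups and is definable in the supersimple field $F$, hence a definably hereditarily \Mgp. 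Moreover $\tau_1\in\wt C_K(W)$, since $C_W(\tau_1)=\wp(F)\times\mathbb F_p$ has index $p$. The only $K$-invariant subgroup of finite index in $W$ is $W$ itself, so $N=W$. And yet $C_W(\tau_1^K)=\bigl(\bigcap_{c\neq 0}c\,\wp(F)\bigr)\times\mathbb F_p=0\times\mathbb F_p$ has infinite index in $W$. (Here $W_i$ is reducible, as it must be.)

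What is missing is a way to use irreducibility on the \emph{image} side, and this is where the paper uses Hempel's generalised Neumann theorem. It replaces $W$ by a definable $K$-invariant $D\sim W$ (chain condition plus Schlichting) and sets $D_1=\wt C_D(\wt C_K(D))$. By symmetry (Fact~\ref{fact_AlCen}(\ref{item_Sym})), $D_1$ has finite index in $D$, and therefore contains $W$ by irreducibility at the finite levels. Then $[D_1,\wt C_K(D)]$ is finite by Fact~\ref{fact_AlCen}(\ref{item_GenVonNeumannThm}) and is $K$-invariant, so irreducibility again gives $[W,\wt C_K(W)]\le W\cap[D_1,\wt C_K(D)]=1$. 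Your ``dual route'' through $[W_i,\langle k_i^{K_i}\rangle]$ points in this direction. In the example, $[W,\tau_c]=0\times\mathbb F_p$ for all $c$: the image side stays finite while the kernel side collapses. But the bound your dual route needs is precisely this finiteness theorem; it does not come from counting conjugates with the chain condition.

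Part (b) follows the paper's Clifford--Schur outline and so rests on (a). Note, however, that (a) does not imply that proper steps of $C_M(W^{(1)})\ge C_M(W^{(1)},W^{(2)})\ge\cdots$ have infinite index. It controls $|W^{(j)}:C_{W^{(j)}}(x)|$ for $x\in M$, not indices inside $M$; e.g.\ $M_i$ may act on $W^{(j)}_i=\mathbb F_{p_i}$ through $\{\pm1\}$ with $p_i\to\infty$. The infinite indices live in $V$. If $x_j\in C_M(W^{(1)},\dots,W^{(j-1)})\setminus C_M(W^{(j)})$, then $W^{(l)}\le C_V(x_n,\dots,x_{l+1})$. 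So the chain $V\ge C_V(x_n)\ge C_V(x_n,x_{n-1})\ge\cdots$ has a step of infinite index at each infinite $W^{(l)}$. Your claim that finite $W^{(j)}$ contribute only boundedly also still needs an argument.
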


\begin{proof}We start by proving (a). We first find a definable $K$--invariant subgroup $D$ of $V$ such that $ W \ls D$ and $\wt{C}_{K}(D) = \wt{C}_{K}(W)$. To do so, using the $\widetilde{\mathcal{\mathfrak{M}}}_c$--condition, we find $k_1, \dots, k_l$ in $\wt{C}_{K}(W)$ and $d \in \N$, such that for $D_0 := \bigcap_{i=1}^l C_{W}(k_i)$ we have that $[D_0 : C_{D_0}(k)]< d$ for all $k \in \wt{C}_{K}(W) $. Note that $D_0$ is definable, $ W \ls D_0$ as $k_1, \dots, k_l$ in $\wt{C}_{K}(W)$ and $\wt{C}_{K}(D_0) = \wt{C}_{K}(W)$. So it is left to make it $K$--invariant. To do so, we consider the uniformly commensurable family $\{ D_0^k : k \in \wt{C}_{K}(W)\}$ of definable subgroups. As $W$ is $K$--invariant, this family is also $K$--invariant. Using Theorem~\ref{th:commensurable}, we find a $K$--invariant definable subgroup $D$ which is commensurable with all members of the family, in particular with $D_0$. This is the desired group $D$ from the beginning of the proof. Now to prove (a) it is enough to prove the following claim.

\begin{claim}
     $\wt{C}_{K}(D) \leqslant C_{K}(W)$.
\end{claim}

\begin{proof}Denote $D_1:=\wt{C}_{D}(\wt{C}_{K}(D))$. Since we are in the $\wt{\mathfrak{M}_c}$--context, we have that the $K$--invariant group $D_1$ is definable and, by Fact \ref{fact_AlCen}(\ref{item_Sym}), $D\sim D_1$, that is, $D_1$ is a finite index subgroup of $D$. So $D_1$ is equal to the ultraproduct $\prod_{i\in I}D_i^1/\mathcal{U}$, where (for $\mathcal{U}$ many $i$), $D_i^1$ is a $K_i$--invariant subgroup of $D_i$ (with the notation $D=\prod_{i\in I}D_i /\mathcal{U}$). Moreover, since $|D/D_1|=p$ for some $p\in \mathbb{N}$, $|D_i/D_i^1|=p$ for $\mathcal{U}$--many $i$. Then, by $K_i$--irreducibility, either $W_i \cap D_i^1=W_i$ or $W_i \cap D_i^1=1$. For $\mathcal{U}$--many $i$ the latter cannot happen since $W \ls D$ is infinite and $|D_i/D_i^1|=p$. This means that $W \leqslant D_1$. Consider then $[D_1, \wt{C}_{K}(D)]$. By Fact \ref{fact_AlCen}(\ref{item_GenVonNeumannThm}) this is a finite (and thus definable) subgroup of $D$ and clearly it is $K$--invariant. Since $W_i$ is $K_i$--irreducible, by repeating the argument above in the finite level, we see that $[W, \wt{C}_{K}(D)] \subset W \cap [D_1, \wt{C}_{K}(D)]=1$. That is $\wt{C}_{K}(D) \leqslant C_{K}(W)$. \end{proof}

We move on to prove part (b). Suppose contrary that (for $\mathcal{U}$--many $i$) the abelian group $M_i$ is not $n$--generated. In what follows, we construct a sequence of subgroups $W_i^1, W_i^2, \ldots ,W_i^n$ of $V_i$ in such way that, setting $W_j=\prod_{i\in I}W_{i}^j/\mathcal{U}$, for any $j\in \{1, \ldots, n\}$, we have \begin{enumerate}
    \item $\wt C_M(W_1+\cdots + W_j)=C_M(W_1+\cdots + W_j)$, and
    \item $C_{M_i}(W_i^1)> C_{M_i}(W_i^1+ W_i^2) > \ldots > C_{M_i}(W_i^1+W_i^2+\cdots + W_i^n)$.
\end{enumerate} 
Since $n$ is arbitrary, this construction gives us an infinite chain of almost centralizers in $M$, thus provides a contradiction.

Now we argue as in \cite[Proof of Claim 3]{Tent-Macpherson2007}. First choose a minimal $G_i$--invariant normal subgroup $A_i$ of $V_i$. Then $A_i$ is a vector space over some prime field, and $G_i$ acts linearly on it. Now choose $W_i^1$ to be any non--trivial subspace of $A_i$ which is $M_i$--irreducible. Then there is an element $w_i \in W_i^1$ so that $C_{M_i}(W_i^1)=C_{M_i}(w_i^{M_i})$. Set $M_i^1=C_{M_i}(W_i^1)$. By Schur's Lemma, $M_i/M_i^1$ is cyclic, so $M_i^1$ is not $(n-1)$--generated. If we have found $W_i^1,\ldots, W_i^r$ and $M_i^1, \ldots, M_i^r$, choose $W_i^{r+1}$ to be any non--trivial $M_i^{r}$--irreducible subspace of $A_i$, and $M_i^{r+1}=M_i^{r}/C_{M_i^{r}}(W_i^{r+1})$. Then, by induction, $M_i^{r+1}$ is not $(n-r-1)$--generated. So we get the sequence as in (2). Since we are in the definably hereditarily $\wt{\mathfrak{M}_c}$--context, it directly follows from part (a) that (1) holds as well. So (b) is proven.
    \end{proof}

\addcontentsline{toc}{section}{References}    
\bibliographystyle{plain}
\bibliography{radical}

\end{document}